\documentclass[final,3p,mathptmx]{elsarticle}

\usepackage{color}
\setlength{\textwidth}{14cm} \setlength{\oddsidemargin}{1cm}
\setlength{\evensidemargin}{0cm} \setlength{\topmargin}{0cm}
\setlength{\textheight}{22cm} \catcode`@=11
\usepackage{color}
\usepackage{amsthm,amssymb,amsfonts,mathrsfs}
\usepackage{amsmath}
\catcode`@=11 \@addtoreset{equation}{section}

\catcode`@=12

\newtheorem{theorem}{Theorem}[section]
\newtheorem{proposition}{Proposition}[section]
\newtheorem{lemma}{Lemma}[section]

\theoremstyle{definition}

\newtheorem{definition}{Definition}[section]
\newtheorem{remark}{Remark}

\newtheorem{example}{Example}[section]

\usepackage{enumitem}  
\usepackage{graphicx} 
\usepackage{float}

\def\X{{\bf x}}
\def\T{{\bf t}}
\def\M{{\bf m}}
\usepackage{hyperref}
\date{}
\begin{document}
\begin{frontmatter}
\title{Minimal time of the pointwise controllability for degenerate singular operators and related numerical results via B-splines}
\author[roma]{Salah Eddargani}
\ead{eddargani@mat.uniroma2.it}
\author[settat,granada]{Amine Sbai \corref{dmb}}
\cortext[dmb]{corresponding author.}
\ead{asbai@correo.ugr.es}
\address[settat]{Hassan First University of Settat, Faculty of Sciences and Technology, MISI Laboratory, B.P. 577, Settat 26000, Morocco}
\address[granada]{Department of Applied Mathematics, University of Granada, Campus de Fuentenueva s/n, 18071-Granada, Spain}
\address[roma]{Department of Mathematics, University of Rome Tor Vergata, 00133 Rome, Italy}
\begin{abstract}
The goal of this paper is to analyze the pointwise controllability properties of a one-dimensional degenerate/singular equation. We prove the conditions that characterize approximate and null controllability. Besides, a numerical simulation based on B-splines will be provided, in which the state $u$ and the control function $h$ are represented in terms of B-spline basis functions. The numerical results obtained match the theoretical ones.
\end{abstract}
\begin{keyword}
Controllability\sep Pointwise control \sep Minimal null control time \sep Moment method \sep B-splines.
\end{keyword}
\end{frontmatter}
\section{Introduction}
\label{intro}
We focus in this work on the pointwise controllability for a degenerate/singular parabolic equation, which is degenerate and singular at the boundary point $x=0$. That is to say, for
\begin{align}\label{WD}
\alpha \in [0,1) \text{ and } \mu \leq (1-\alpha)^2/4,
\end{align}
we consider the following control problem:
\begin{equation}\label{problem}
\left\{
  \begin{array}{ll}
\mathbf{A}_{\alpha, \mu} u =\delta_{\overline{b}} h(t), & (t,x) \in \omega_T=(0,T)\times(0,1),\\
u(t,0)=u(t,1)=0, & t \in (0,T), \\
u(0,x)=u_0(x), & x \in (0,1),
  \end{array}
\right.
\end{equation}
where $\mathbf{A}_{\alpha, \mu} u:=u_t - (x^\alpha u_x)_{x} - \frac{\mu}{x^{2-\alpha}}u$,  $T>0$ fixed, $u_0 \in L^{2}(0,1)$, $\alpha$ and $\mu$ are two real parameters and $\delta_{\overline{b}}$ the Dirac delta function supported in a given point $\overline{b}\in (0,1)$,  acted upon by a control function $h(t)$. 
In this case we are talking about pointwise control at $\overline{b}$.

The aim is to see if we can find a control force $h$, that acts on the system \eqref{problem}, in such a way that we can steer (or at least approximately steer) its solution towards zero equilibrium. In particular, the issues of approximate and null controllability will be treated. Furthermore, we will provide numerical simulation based on B-splines of different degrees.

Nowadays, splines represent a fundamental tools in various fields, among them numerical simulation, and particularly in the context of solving PDEs.  Splines are piecewise-defined functions that consists of polynomial segments joined together smoothly at specific points called knots.  They  serve as a flexible and efficient means of discretizing the solution space. Their piecewise polynomial representation of functions, allowing for local control and refinement of the approximation. This flexibility is crucial when dealing with the spatial discretization of PDEs, as it enables adaptive refinement in regions of interest while maintaining a computationally efficient representation in less critical areas.

Splines are used also in the context of controllability simulation of PDEs. Indeed, the controllability in the context of PDEs refers to the ability to manipulate the system's state through the application of external controls. Splines can play a significant role in achieving controllability by providing a framework for representing control inputs and optimizing their distribution throughout the simulation domain. The piecewise nature of splines, as well as the non-negative partition of unity of B-splines (or basis splines) allow for the design of control strategies that can be localized in both space and time, offering finer control over the system's behavior.

Now, we highlight definitions \ref{def ac} and \ref{def nc} of approximate controllability and null controllability, respectively.
\begin{definition}[Approximate controllability]\label{def ac}
Equation \eqref{problem} is said to be approximately controllable at target time $T>0$, if for any $\varepsilon > 0$ and $u_0, u_T \in L^{2}(0,1)$,
there exists a control $h \in L^2(0,T)$ where the solution $y$ to \eqref{problem} satisfies
\begin{equation}\label{ac}
\|u(T)-u_T\|_{L^{2}(0,1)} \leq \varepsilon.
\end{equation}
\end{definition}
Let us define system \eqref{adjproblem} as the backward adjoint problem of \eqref{problem}:\\
\begin{equation}\label{adjproblem}
\left\{
  \begin{array}{ll}
\varphi_t + (x^\alpha\varphi_x)_{x} + \frac{\mu}{x^{2-\alpha}}\varphi  =0, & (t,x) \in \omega_T,\\
\varphi(t,0)=\varphi(t,1)=0, & t \in (0,T), \\
\varphi(T,x)=\varphi_0(x), & x \in (0,1),
  \end{array}
\right.
\end{equation}
where $\varphi_0 \in L^2(0,1)$ is a given initial datum.
Then, it is well known in \cite[Theorem 2.43]{coron} that the last issue of approximate controllability of \eqref{problem} can be reduced to the unique continuation property
for the adjoint equation \eqref{adjproblem}.
\begin{proposition}\label{prop1}
Equation \eqref{problem} is approximately controllable at time $T>0$ if and only if its adjoint equation \eqref{adjproblem}
satisfies the property:
\begin{equation}\label{ucproperty}
\text{For all } \varphi_0 \in L^2(0,1) \text{ s.t. } \varphi(\cdot ,\overline{b})=0
\text{ on } (0,T) \Rightarrow \varphi_0=0 \quad \text{in}\quad (0,1).
\end{equation}
\end{proposition}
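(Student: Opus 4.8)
The plan is to run the classical Hahn--Banach duality argument linking approximate controllability to a unique continuation property for the adjoint, as indicated through \cite[Theorem 2.43]{coron}; the only delicate point is to carry it out in the weighted functional setting adapted to the degenerate/singular operator $\mathbf{A}_{\alpha,\mu}$ under the assumption \eqref{WD}.

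Since $\delta_{\overline{b}}\notin L^2(0,1)$, I would first fix the meaning of the state by transposition: $u$ is the unique function in a suitable space (for which $u(T)\in L^2(0,1)$ is well defined) such that, for every $z_0\in L^2(0,1)$,
\[
\int_0^1 u(T,x)\,z_0(x)\,dx-\int_0^1 u_0(x)\,\psi(0,x)\,dx=\int_0^T h(t)\,\psi(t,\overline{b})\,dt,
\]
where $\psi$ solves the adjoint problem \eqref{adjproblem} with final datum $z_0$. This identity is the one obtained formally by multiplying \eqref{problem} by $\psi$ and integrating by parts over $\omega_T$: the boundary contributions at $x=1$ vanish by the Dirichlet condition, those at the degenerate/singular endpoint $x=0$ vanish in the weighted energy space (this is where \eqref{WD} enters, guaranteeing simultaneously the well-posedness of \eqref{adjproblem} and the vanishing of the weighted fluxes $x^{\alpha}u_x$ and $x^{\alpha}\psi_x$ at $0$), and the source produces the interior trace $\psi(t,\overline{b})$, which is meaningful and square integrable on $(0,T)$ by interior parabolic regularity since $\overline{b}\in(0,1)$ is bounded away from $x=0$ and $x=1$. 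I would invoke the known well-posedness theory for this class of operators to make all of this rigorous.

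Next, by linearity of \eqref{problem} in the pair $(u_0,h)$, the set of states reachable at time $T$ from $u_0$ is an affine translate of the reachable set from the origin, $\mathcal{R}(T):=\{u(T)\ :\ u_0=0,\ h\in L^2(0,T)\}$; hence \eqref{problem} is approximately controllable at time $T$ if and only if the linear subspace $\mathcal{R}(T)$ is dense in $L^2(0,1)$, equivalently $\mathcal{R}(T)^{\perp}=\{0\}$. Specializing the transposition identity to $u_0=0$ and $z_0=\varphi_0$, with $\varphi$ the solution of \eqref{adjproblem} with final datum $\varphi_0$, one gets
\[
\int_0^1 u(T,x)\,\varphi_0(x)\,dx=\int_0^T h(t)\,\varphi(t,\overline{b})\,dt\qquad\text{for every }h\in L^2(0,T),
\]
so $\varphi_0\in\mathcal{R}(T)^{\perp}$ if and only if $\int_0^T h(t)\,\varphi(t,\overline{b})\,dt=0$ for all $h\in L^2(0,T)$, that is, if and only if $\varphi(\cdot,\overline{b})=0$ on $(0,T)$. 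Therefore the condition $\mathcal{R}(T)^{\perp}=\{0\}$ is exactly the implication \eqref{ucproperty}, and the equivalence follows.

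The main obstacle is entirely in the first step: rigorously justifying the transposition formulation and the integration by parts for the degenerate/singular operator, in particular the vanishing of the boundary fluxes at $x=0$ inside the correct weighted Sobolev space (which is precisely where \eqref{WD} is used) and the legitimacy of testing the Dirac mass against the interior trace $\varphi(t,\overline{b})$. Once this duality identity is established, the remaining steps are the routine orthogonality/Hahn--Banach argument and involve no further difficulty.
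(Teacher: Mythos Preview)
The paper does not actually prove Proposition~\ref{prop1}: it is stated immediately after the sentence ``it is well known in \cite[Theorem 2.43]{coron} that the last issue of approximate controllability of \eqref{problem} can be reduced to the unique continuation property for the adjoint equation \eqref{adjproblem}'', and no proof environment is attached to it. The proposition is treated as a direct instance of the abstract duality result in Coron's book, and the paper then \emph{uses} it (in Section~\ref{PAC}) to prove Theorem~\ref{acresult}.

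Your argument is precisely the standard Hahn--Banach/transposition proof underlying \cite[Theorem 2.43]{coron}, written out explicitly in the weighted setting of $\mathbf{A}_{\alpha,\mu}$. It is correct, and you have correctly located the only nontrivial ingredients: (i) the well-posedness of the state by transposition and of the adjoint in the spaces $H^{1,\mu}_{\alpha,0}$, with vanishing of the weighted fluxes at $x=0$ under \eqref{WD}, and (ii) the legitimacy of evaluating $\varphi(\cdot,\overline{b})$ as an $L^2(0,T)$ function. Both are covered by the framework of \cite{Van2011} and interior parabolic regularity, so the hedging you include is appropriate but not problematic. In short: your proof supplies what the paper simply imports from a reference, and there is no discrepancy in approach to discuss.
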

\begin{definition}[Null controllability]\label{def nc} 
Null controllability at a given time $T>0$ for equation \eqref{problem} holds true if, for every $u_0 \in L^{2}(0,1)$, there exists a control $h \in L^2(0,T)$ such that $u$ the solution of \eqref{problem} fulfills
\begin{equation}\label{ncp}
u(T,x)=0 \, \text{ for all } x \in (0,1).
\end{equation}
\end{definition}
\subsection{Background and motivations}
Before going any further, let us frame the operator $\mathbf{A}_{\alpha, \mu}$, which is purely degenerate when $\mu=0$, but when $\alpha=0$, this operator becomes purely singular as shown below
\begin{align}
\mathbf{A}_\alpha u:=u_t-\left(x^\alpha u_x\right)_x, \quad x \in(0,1), \tag{PD} \label{PD} \\
\mathbf{A}_\mu u:=u_t-u_{x x}-\frac{\mu}{x^2} u, \quad x \in(0,1). \tag{PS} \label{PS}
\end{align}
Null controllability of the operator $\mathbf{A}_{\alpha, \mu}$ was investigated in \cite{Van2011}. In particular, the authors proved that the problem is null controllable iff:
$\alpha \in [0,2)$ and $\mu \leq \mu(\alpha)$,
where $\mu(\alpha):=\frac{(1-\alpha)^2}{4}$ appears in the following Hardy inequality
\begin{align}\label{hardy}
\frac{(1-\alpha)^2}{4} \int_0^1 \frac{z^2}{x^{2-\alpha}} d x \leq \int_0^1 x^\alpha z_x^2 d x .
\end{align}
We refer to \cite{AHSS2022, gms2024, gms2024stab} for other situations on this theme.

Now, let us turn to the pointwise control problems. We recall that S. Dolecki has discussed the minimal control time  in \cite{dol} for the null controllability chalenge of parabolic equations. In particular, the control was supported in a given point $\overline{b}$ in the interior of the domain as in our system \eqref{problem} where particularly $\alpha=\mu=0$.
Using the well-known moment method \cite{fatrus1971}, S. Dolecki has determined a minimal time $T_0 \in[0,+\infty]$ such that
\begin{equation*}\label{T0}
T_0(\overline{b})=\limsup _{n \rightarrow \infty}-\frac{\log (|\sin (n \overline{b})|)}{n^2}.
\end{equation*}
Obviously, we can remark that $T_0(\overline{b})$ depends on the position of the control, since null controllability is guaranteed for any target time $T>T_0$, but if $T<T_0$, the problem \eqref{problem} where $\alpha=\mu=0$, cannot be null controllable. 

In the same direction, in \cite{allal2020}, the pointwise null controllability for \eqref{problem} with $\mu = 0$ was considered. The authors have described the minimal time for the purely degenerate operator $\mathbf{A}_\alpha$ (i.e. the \eqref{PD} case), where it depends on $\overline{b}$ (control position) and the degeneracy parameter. In particular
\begin{equation*}\label{T0alpha}
T^{(\alpha)}_0(\overline{b})=\limsup _{n \rightarrow+\infty}-\frac{\log \left(\left|\Phi_{v_\alpha, n}\left(\overline{b}\right)\right|\right)}{\lambda_{v_\alpha, n}},
\end{equation*} 
where $\left(\lambda_{v_\alpha, k},\Phi_{v_\alpha, k}\right)_{k \geq 1}$ represent the spectrum of $\mathbf{A}_{\alpha}$. More precisely, the authors proved that the null controllability holds when $T>T^{(\alpha)}_0$, and fails when $T<T^{(\alpha)}_0$.

The goal of this paper is to investigate the same kind of issues, but for degenerate singular equations and in particular to deal with both approximate and null controllability characterization for our pointwise problem \eqref{problem} with respect to the degeneracy and singularity parameters. 

The problem \eqref{problem} considered in this manuscript, has never been treated before.

\subsection{Main results}
Two controllability results for the equation \eqref{problem} will be treated in this work. We show first the geometric condition \eqref{cond1}, that gives a guarantee of approximate controllability at $T>0$ for the equation \eqref{problem}. As a second step, considering the same condition, we show $T^{(\alpha, \mu)}_0(\overline{b}) \in [0,+\infty]$ (see \eqref{mintime}) such that: 
\begin{center}
{\sl Equation \eqref{problem} is null controllable if $T >T^{(\alpha, \mu)}_0(\overline{b})$ and is not for $T <T^{(\alpha, \mu)}_0(\overline{b})$.}
\end{center}
To do so, Assuming \eqref{WD} and let $\nu(\alpha, \mu)=\frac{2}{2-\alpha}\sqrt{\mu(\alpha)-\mu}.$ Thus, we describe the set $\mathcal{P}$ as
\begin{equation}\label{set}
\mathcal{P}= \left\{ \left( \dfrac{j_{\nu(\alpha, \mu), k} }{j_{\nu(\alpha, \mu), n}} \right)^{\frac{2}{2-\alpha}},\quad n > k \geq 1\right\},
\end{equation}
where $(j_{\nu(\alpha, \mu),k})_{k\geq 1}$ represents the Bessel functions zeros (see Section \ref{CP}). Using this notations, we can give our result of approximate controllability.
\begin{theorem}\label{acresult}
Assuming \eqref{WD}. Approximate controllability at $T > 0$ for equation \eqref{problem} holds true iff:
\begin{equation}\label{cond1}
\overline{b} \notin \mathcal{P}.\tag{H1}
\end{equation}
\end{theorem}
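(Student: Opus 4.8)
\textbf{Proof strategy for Theorem~\ref{acresult}.}
The plan is to reduce everything, via Proposition~\ref{prop1}, to the unique continuation property \eqref{ucproperty} for the adjoint equation \eqref{adjproblem}, and then to decide it by a spectral (moment-type) argument. I would first recall from the spectral analysis of Section~\ref{CP} that the operator governing \eqref{adjproblem} is self-adjoint with compact resolvent in $L^2(0,1)$, with a \emph{simple} increasing spectrum $(\lambda_{\nu(\alpha,\mu),k})_{k\ge1}$, $\lambda_{\nu,k}=\bigl(\tfrac{2-\alpha}{2}\bigr)^2 j_{\nu,k}^2\to\infty$ like $k^2$, and an orthonormal basis of eigenfunctions of the form $\Phi_{\nu,k}(x)=c_k\,x^{(1-\alpha)/2}J_{\nu}\!\bigl(j_{\nu,k}\,x^{(2-\alpha)/2}\bigr)$, where $\nu=\nu(\alpha,\mu)$. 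Expanding the terminal datum $\varphi_0=\sum_{k\ge1}a_k\Phi_{\nu,k}$, the solution of \eqref{adjproblem} is $\varphi(t,x)=\sum_{k\ge1}a_k e^{-\lambda_{\nu,k}(T-t)}\Phi_{\nu,k}(x)$, which (for $t<T$) is smooth in a neighbourhood of $x=\overline b\in(0,1)$, so that the condition $\varphi(t,\overline b)=0$ on $(0,T)$ is meaningful and reads
\[
\sum_{k\ge1}\bigl(a_k\,\Phi_{\nu,k}(\overline b)\bigr)\,e^{-\lambda_{\nu,k}(T-t)}=0,\qquad t\in(0,T).
\]

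The second step is to invoke the linear independence — in fact the minimality — of the exponential family $\{e^{-\lambda_{\nu,k}s}\}_{k\ge1}$ in $L^2(0,T)$. This rests on two facts that follow from the Bessel asymptotics $j_{\nu,k}=k\pi+O(1)$: the eigenvalues are increasing with a uniform spectral gap, and $\sum_{k\ge1}\lambda_{\nu,k}^{-1}<\infty$ (the Müntz-type summability condition). A standard biorthogonality argument then forces $a_k\,\Phi_{\nu,k}(\overline b)=0$ for every $k\ge1$. Hence \eqref{ucproperty} holds if and only if $\Phi_{\nu,k}(\overline b)\neq0$ for all $k\ge1$: indeed, if no eigenfunction vanishes at $\overline b$ then all $a_k$ vanish and $\varphi_0=0$, while if some $\Phi_{\nu,k_0}(\overline b)=0$ then $\varphi_0=\Phi_{\nu,k_0}$ gives $\varphi(\cdot,\overline b)\equiv0$ with $\varphi_0\ne0$.

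It remains to translate ``no eigenfunction vanishes at $\overline b$'' into \eqref{cond1}. Since $c_k\,\overline b^{(1-\alpha)/2}\neq0$ and the positive zeros of $J_\nu$ are exactly $0<j_{\nu,1}<j_{\nu,2}<\cdots$, one has $\Phi_{\nu,k}(\overline b)=0$ iff $j_{\nu,k}\,\overline b^{(2-\alpha)/2}=j_{\nu,m}$ for some $m\ge1$; because $\overline b^{(2-\alpha)/2}\in(0,1)$ this forces $j_{\nu,m}<j_{\nu,k}$, i.e.\ $m<k$, and is equivalent to $\overline b=\bigl(j_{\nu,m}/j_{\nu,k}\bigr)^{2/(2-\alpha)}$ with $m<k$, that is, $\overline b\in\mathcal P$. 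Thus some eigenfunction vanishes at $\overline b$ precisely when $\overline b\in\mathcal P$. Combining this with the previous step yields: approximate controllability at $T>0$ holds iff $\overline b\notin\mathcal P$, which is exactly \eqref{cond1}.

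The main obstacle I anticipate is not the final computation but the functional-analytic groundwork that precedes it: setting up the weighted energy space, the well-posedness of \eqref{adjproblem} and the interior regularity of $\varphi$ needed to make sense of the pointwise trace $\varphi(\cdot,\overline b)$ on $(0,T)$ (this uses crucially that $\overline b$ stays away from the degenerate/singular point $x=0$), and the rigorous justification of the term-by-term evaluation of the series together with the minimality of the exponentials in $L^2(0,T)$ under the gap and summability conditions. The Bessel-function facts — simplicity of the spectrum, the explicit eigenfunctions, their completeness in $L^2(0,1)$, and the asymptotics of $j_{\nu,k}$ — I would quote from Section~\ref{CP} and standard references; once these are in place, the equivalence with \eqref{cond1} is the short argument sketched above.
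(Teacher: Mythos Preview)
Your proposal is correct and follows essentially the same route as the paper: reduction to the unique continuation property via Proposition~\ref{prop1}, spectral expansion of the adjoint solution, use of the biorthogonal family to $\{e^{-\lambda_{\alpha,\mu,k}t}\}_{k\ge1}$ (the paper's Theorem~\ref{biorth}) to deduce $a_k\Phi_{\alpha,\mu,k}(\overline b)=0$ for all $k$, and the counterexample $\varphi_0=\Phi_{\alpha,\mu,n_0}$ when some eigenfunction vanishes at $\overline b$. Your explicit translation of ``$\Phi_{\nu,k}(\overline b)=0$ for some $k$'' into $\overline b\in\mathcal P$ is in fact more detailed than the paper's own argument, which leaves that step implicit.
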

Moreover, the following result holds.
\begin{theorem}\label{ncresult}
Assuming \eqref{WD} and let $u_0 \in L^2(0,1)$. Assume that \eqref{cond1} holds and take
\begin{equation}\label{mintime}
T^{(\alpha , \mu)}(\overline{b}) := \limsup\limits_{k \to + \infty} -  \frac{\log(|\Phi_{\alpha,\mu, k}(\overline{b})| )}{\lambda_{\alpha,\mu, k}},
\end{equation}
where $(\Phi_{\alpha,\mu, k})_{k\geq 1}$ and $(\lambda_{\alpha,\mu, k})_{k\geq 1}$ are respectively the sequence of eigenvectors and its associated eigenvalues for the specral problem related to \eqref{problem} (see section \ref{CP}).
Then, for $T > 0$, one can obtain:
\begin{enumerate}[label=(\roman*)]
\item If $T< T^{(\alpha , \mu)}(\overline{b})$, equation \eqref{problem} cannot be null controllable $T$.
\item If $T> T^{(\alpha , \mu)}(\overline{b})$, null controllability at $T$ of equation \eqref{problem} holds true.
\end{enumerate}
\end{theorem}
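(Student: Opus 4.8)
The plan is to use the classical moment method. First I would expand the solution of \eqref{problem} along the eigenbasis $(\Phi_{\alpha,\mu,k})_{k\ge1}$ of the (self-adjoint, positive) operator associated with $\mathbf{A}_{\alpha,\mu}$ with Dirichlet conditions, whose existence, simplicity of eigenvalues, and explicit description in terms of Bessel functions $J_{\nu(\alpha,\mu)}$ and their zeros $j_{\nu(\alpha,\mu),k}$ will have been established in Section \ref{CP}. Writing $u_0=\sum_k a_k\Phi_{\alpha,\mu,k}$ and using Duhamel's formula together with the fact that testing against $\delta_{\overline b}$ picks out the value $\Phi_{\alpha,\mu,k}(\overline b)$, null controllability at time $T$ is equivalent to solving the moment problem: find $h\in L^2(0,T)$ such that
\begin{equation*}
\int_0^T h(t)\,e^{-\lambda_{\alpha,\mu,k}(T-t)}\,dt\;\Phi_{\alpha,\mu,k}(\overline b) \;=\; -\,a_k\,e^{-\lambda_{\alpha,\mu,k}T},\qquad k\ge1.
\end{equation*}
Hypothesis \eqref{cond1} guarantees (via the spectral analysis of Section \ref{CP}, since the ratios of Bessel zeros appearing in $\mathcal P$ are exactly the coincidences $\lambda_{\alpha,\mu,k}$'s that would force $\Phi_{\alpha,\mu,n}(\overline b)=0$) that $\Phi_{\alpha,\mu,k}(\overline b)\neq0$ for every $k$, so the coefficients on the left are nonzero and the moment problem is well posed.

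Next I would invoke the theory of biorthogonal families to real exponentials $\{e^{-\lambda_{\alpha,\mu,k}t}\}$. Using the known asymptotics $j_{\nu,k}\sim k\pi$ (so that $\lambda_{\alpha,\mu,k}$ grows like $k^{2/(2-\alpha)}\cdot$const, hence $\sum_k 1/\lambda_{\alpha,\mu,k}<\infty$ because $2/(2-\alpha)>1$), the Müntz-type condition is satisfied and there exists a biorthogonal family $(q_m)_{m\ge1}\subset L^2(0,T)$ with $\int_0^T q_m(t)e^{-\lambda_{\alpha,\mu,k}(T-t)}dt=\delta_{mk}$, together with the sharp bound $\|q_m\|_{L^2(0,T)}\le C(\varepsilon)\,e^{(T^{(\alpha,\mu)}(\overline b)+\varepsilon)\lambda_{\alpha,\mu,m}}$ — more precisely, for the exponents one has an upper bound of the form $\|q_m\|\le C e^{\varepsilon\lambda_{\alpha,\mu,m}}$ coming from the standard construction, and the position-dependent loss $e^{-\log|\Phi_{\alpha,\mu,m}(\overline b)|}$ is what is absorbed when we divide by $\Phi_{\alpha,\mu,m}(\overline b)$. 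Then the formal solution
\begin{equation*}
h(t)=-\sum_{m\ge1}\frac{a_m\,e^{-\lambda_{\alpha,\mu,m}T}}{\Phi_{\alpha,\mu,m}(\overline b)}\,q_m(t)
\end{equation*}
converges in $L^2(0,T)$ provided $T>T^{(\alpha,\mu)}(\overline b)$: indeed by definition of the $\limsup$ in \eqref{mintime}, for such $T$ one has $|\Phi_{\alpha,\mu,m}(\overline b)|^{-1}\le e^{(T^{(\alpha,\mu)}(\overline b)+\varepsilon)\lambda_{\alpha,\mu,m}}$ for all large $m$, while $e^{-\lambda_{\alpha,\mu,m}T}\|q_m\|$ decays like $e^{-(T-\varepsilon)\lambda_{\alpha,\mu,m}}$, so choosing $\varepsilon$ small the series is dominated by $\sum_m |a_m| e^{-c\lambda_{\alpha,\mu,m}}$ which converges since $(a_m)\in\ell^2$ and $\lambda_{\alpha,\mu,m}\to\infty$. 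This $h$ solves the moment problem, hence drives $u(T,\cdot)$ to $0$, proving (ii).

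For the negative part (i) I would argue by contradiction/duality: if \eqref{problem} were null controllable at some $T<T^{(\alpha,\mu)}(\overline b)$, then the observability inequality $\|\varphi(0,\cdot)\|_{L^2}^2\le C\int_0^T|\varphi(t,\overline b)|^2dt$ must hold for all solutions $\varphi$ of the adjoint equation \eqref{adjproblem}. Testing it on the single modes $\varphi(t,x)=e^{-\lambda_{\alpha,\mu,k}(T-t)}\Phi_{\alpha,\mu,k}(x)$ gives $1\le C\,|\Phi_{\alpha,\mu,k}(\overline b)|^2\int_0^T e^{-2\lambda_{\alpha,\mu,k}(T-t)}dt\le \frac{C}{2\lambda_{\alpha,\mu,k}}|\Phi_{\alpha,\mu,k}(\overline b)|^2$, i.e. $-\log|\Phi_{\alpha,\mu,k}(\overline b)|\le \tfrac12\log(C/2\lambda_{\alpha,\mu,k})+o(\lambda_{\alpha,\mu,k})$, hence, dividing by $\lambda_{\alpha,\mu,k}$ and taking $\limsup$, $T^{(\alpha,\mu)}(\overline b)\le 0\le T$ in the relevant scale — more carefully, one extracts along a subsequence realizing the $\limsup$ that $T$ must be $\ge T^{(\alpha,\mu)}(\overline b)$, contradicting $T<T^{(\alpha,\mu)}(\overline b)$. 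The main obstacle is the quantitative construction of the biorthogonal family with the sharp exponential bound $\|q_m\|_{L^2(0,T)}\le C(\varepsilon)e^{\varepsilon\lambda_{\alpha,\mu,m}}$ uniformly in the gap structure of $(\lambda_{\alpha,\mu,k})$; this is where the precise asymptotics and separation of the Bessel zeros $j_{\nu(\alpha,\mu),k}$ from Section \ref{CP} are essential, and it is the delicate analytic point on which both the convergence of the series defining $h$ and the matching of the minimal time rely.
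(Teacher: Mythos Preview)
Your overall strategy is exactly the paper's: reduce to the moment problem, solve it via a biorthogonal family $(q_k)$ to $\{e^{-\lambda_{\alpha,\mu,k}t}\}$ with the bound $\|q_k\|_{L^2(0,T)}\le C_\varepsilon e^{\varepsilon\lambda_{\alpha,\mu,k}}$, and for the negative part test the observability inequality on the single modes $\varphi_k(t,x)=e^{-\lambda_{\alpha,\mu,k}(T-t)}\Phi_{\alpha,\mu,k}(x)$. However, two concrete slips should be fixed.

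First, the eigenvalue asymptotics are off: since $j_{\nu,k}\sim k\pi$ and $\lambda_{\alpha,\mu,k}=\big(\tfrac{2-\alpha}{2}\big)^2 j_{\nu,k}^2$, one has $\lambda_{\alpha,\mu,k}\sim c\,k^2$, \emph{not} $k^{2/(2-\alpha)}$. This matters because the existence of the biorthogonal family (Theorem~\ref{thm-biorth}) requires not only $\sum 1/\lambda_k<\infty$ but also the gap condition $|\lambda_k-\lambda_l|\ge\rho|k-l|$, which the paper verifies in Lemma~\ref{gapresult} using the precise bounds \eqref{boundcase1}--\eqref{boundcase2} on $j_{\nu,k}$; your write-up should invoke this explicitly rather than only the M\"untz condition.

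Second, and more importantly, the computation in part (i) is garbled. With $\varphi_0=\Phi_{\alpha,\mu,k}$ one has $\varphi(0,\cdot)=e^{-\lambda_{\alpha,\mu,k}T}\Phi_{\alpha,\mu,k}$, so the left side of the observability inequality is $e^{-2\lambda_{\alpha,\mu,k}T}$, not $1$. The correct chain is
\[
e^{-2\lambda_{\alpha,\mu,k}T}\le C\,|\Phi_{\alpha,\mu,k}(\overline b)|^2\int_0^T e^{-2\lambda_{\alpha,\mu,k}(T-t)}\,dt\le \frac{C}{2\lambda_{\alpha,\mu,1}}|\Phi_{\alpha,\mu,k}(\overline b)|^2,
\]
hence $1\le C' e^{2\lambda_{\alpha,\mu,k}T}|\Phi_{\alpha,\mu,k}(\overline b)|^2$, i.e.\ $-\dfrac{\log|\Phi_{\alpha,\mu,k}(\overline b)|}{\lambda_{\alpha,\mu,k}}\le T+\dfrac{\log C'}{2\lambda_{\alpha,\mu,k}}$. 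Passing to a subsequence realizing the $\limsup$ now yields $T^{(\alpha,\mu)}(\overline b)\le T$, the desired contradiction. As you wrote it (left side equal to $1$), the factor $T$ never enters and you would only obtain $T^{(\alpha,\mu)}(\overline b)\le 0$, which proves nothing. Once these two points are corrected, your argument coincides with the paper's.
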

\begin{remark}\label{remarkobs}
By duality, the equation \eqref{problem} is null controllable at time $T$ is equivalent to the observability estimate:
\begin{equation}\label{oi1}
\|\varphi(0, \cdot)\|_{L^2(0,1)}^2 \leq C \int_0^T \varphi(t, \overline{b})^2 d t,
\end{equation}
where  $\varphi$ is a solution of problem \eqref{adjproblem} and $C>0$ (constant). 
\end{remark}
To demonstrate Theorem \ref{ncresult}, we need to reduce null controllability issue into a moment problem (see Section \ref{PNC}). To do so, we use the following result (see \cite{ABGT2011, FBT}). 
\begin{theorem}\label{thm-biorth}
Given $T>0$. Assume that $ \{\Lambda_k\}_{k\geq1} \subset \mathbb{R}_{+} $ fulfills 
\begin{equation}\label{separability}
\sum\limits_{k\geq 1} \dfrac{1}{|\Lambda_k|} < + \infty \text{ and }
|\Lambda_k - \Lambda_l| \geq \rho |k - l|, \quad \forall k, l \geq 1.
\end{equation}
for a constant $\rho >0$.
Then, there exists a family $\{q_k \}_{k\geq 1} \subset L^2(0, T) $ biorthogonal to $\{e^{- \Lambda_k t}\}_{k\geq 1}$.
Furthermore, we have
\begin{equation}\label{bound}
\forall \varepsilon > 0,\, \exists C_{\varepsilon} > 0,\, \|q_k \|_{L^2(0, T)} \leq C_\varepsilon e^{\varepsilon \Lambda_k}, \quad \forall k \geq 1.
\end{equation}
\end{theorem}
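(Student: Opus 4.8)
The plan is to establish this by the classical complex‑analytic construction for parabolic moment problems, due to Fattorini and Russell and refined in \cite{ABGT2011, FBT}; I indicate the main steps. \emph{Step 1: reduction via Laplace transform and Paley--Wiener.} Extending a candidate $q_k\in L^2(0,T)$ by zero, the biorthogonality relations $\int_0^T q_k(t)e^{-\Lambda_l t}\,dt=\delta_{kl}$ are exactly the interpolation conditions $(\mathcal L q_k)(\Lambda_l)=\delta_{kl}$ for its Laplace transform $(\mathcal L q_k)(z)=\int_0^T q_k(t)e^{-zt}\,dt$, which is entire, of exponential type at most $T$, bounded on the half-plane $\{\operatorname{Re}z\ge 0\}$, and square integrable along $i\R$ with $\|(\mathcal L q_k)(i\cdot)\|_{L^2(\R)}=\sqrt{2\pi}\,\|q_k\|_{L^2(0,T)}$. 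Conversely, by the Paley--Wiener theorem every entire function with these properties is the Laplace transform of a unique element of $L^2(0,T)$. So it suffices to build, for each $k$ and each $\varepsilon>0$, an entire function $F_k$ with $F_k(\Lambda_l)=\delta_{kl}$, of exponential type $<T$, bounded on $\{\operatorname{Re}z\ge 0\}$, and with $\|F_k\|_{L^2(i\R)}\le C_\varepsilon e^{\varepsilon\Lambda_k}$.

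\emph{Step 2: the interpolating function.} Let $\Psi(z)=\prod_{k\ge1}(1-z/\Lambda_k)$ be the Weierstrass canonical product, which converges and is entire of exponential type zero precisely because $\sum_k 1/\Lambda_k<\infty$, with simple zeros exactly at the $\Lambda_k$. For small $\eta,\kappa>0$ with $\eta+\kappa<T$, pick a multiplier $m_\eta$ — an entire function of exponential type $\le\eta$, not vanishing on $(0,+\infty)$, and decaying along $i\R$ fast enough that $z\mapsto \Psi(z)m_\eta(z)/(z-\Lambda_k)$ is bounded and square integrable on $i\R$ uniformly in $k\ge1$; such $m_\eta$ exist for every $\eta>0$ thanks to $\sum_k 1/\Lambda_k<\infty$ (a log‑integrability, i.e. Fattorini--Russell, multiplier lemma). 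Set
\begin{equation*}
F_k(z)=\frac{\Psi(z)}{(z-\Lambda_k)\,\Psi'(\Lambda_k)}\cdot\frac{m_\eta(z)}{m_\eta(\Lambda_k)}\cdot e^{-\kappa(z-\Lambda_k)} .
\end{equation*}
Then $F_k$ is entire (the zero of $\Psi$ at $\Lambda_k$ is cancelled), $F_k(\Lambda_l)=\delta_{kl}$, its exponential type is $\le\eta+\kappa<T$, and the factor $e^{-\kappa z}$, of modulus $\le1$ on $\{\operatorname{Re}z\ge0\}$, together with the subexponential growth of $\Psi$ on $(0,+\infty)$ forces $F_k$ to be bounded on $\{\operatorname{Re}z\ge0\}$. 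Hence $F_k=\mathcal L q_k$ with $q_k\in L^2(0,\eta+\kappa)\subset L^2(0,T)$, and
\begin{equation*}
\|F_k\|_{L^2(i\R)}\le \frac{C_0\,e^{\kappa\Lambda_k}}{|\Psi'(\Lambda_k)|\;|m_\eta(\Lambda_k)|},
\end{equation*}
where $C_0<\infty$ does not depend on $k$.

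\emph{Step 3: the two lower bounds and conclusion.} From the uniform gap $|\Lambda_k-\Lambda_l|\ge\rho|k-l|$ one estimates $\Psi'(\Lambda_k)=\prod_{l\ne k}(1-\Lambda_k/\Lambda_l)$ by splitting the (conditionally convergent) product according to whether $\Lambda_l$ is close to, much smaller than, or much larger than $\Lambda_k$: the nearby factors are controlled by the gap, the far ones by $\sum_l 1/\Lambda_l<\infty$; this yields $-\log|\Psi'(\Lambda_k)|=o(\Lambda_k)$, i.e. $|\Psi'(\Lambda_k)|\ge c_\varepsilon e^{-\varepsilon\Lambda_k}$ for every $\varepsilon>0$. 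Moreover, choosing $m_\eta=\mathcal L\rho$ with $\rho\ge0$ supported in $(0,\eta)$ gives $m_\eta(\Lambda_k)\ge c\,e^{-\eta\Lambda_k}$. Plugging these into the bound of Step 2 and choosing $\eta,\kappa,\varepsilon$ small enough that $\eta+\kappa+\varepsilon$ is below the prescribed level yields $\|F_k\|_{L^2(i\R)}\le C_\varepsilon e^{\varepsilon\Lambda_k}$; by Step 1 this produces $q_k\in L^2(0,T)$ with $\mathcal L q_k=F_k$, hence $\int_0^T q_k(t)e^{-\Lambda_l t}\,dt=\delta_{kl}$ and the bound \eqref{bound}.

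The part I expect to be the real work lies in Step 2 and the first half of Step 3: the sharp growth and separation estimates for the canonical product — in particular the lower bound $|\Psi'(\Lambda_k)|\ge c_\varepsilon e^{-\varepsilon\Lambda_k}$, where both conditions in \eqref{separability} are genuinely used — and the construction of a multiplier of arbitrarily small exponential type with the required decay along the imaginary axis (the point where $\sum_k 1/\Lambda_k<\infty$ is essential). Once this Fattorini--Russell machinery is available the assembly through Paley--Wiener is routine, which is why the statement is quoted here from \cite{ABGT2011, FBT} rather than proved in detail.
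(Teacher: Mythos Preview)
The paper does not prove this theorem: it is stated as a tool result and attributed to \cite{ABGT2011, FBT} without any argument in the text. Your sketch follows precisely the classical Fattorini--Russell/Paley--Wiener construction that underlies those references (canonical product over the $\Lambda_k$, multiplier of small exponential type, lower bound on $|\Psi'(\Lambda_k)|$ from the gap condition), so your approach is consistent with the cited sources and there is nothing further to compare.
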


\section{Well-posedness and spectral properties}
\label{CP}
In this section we will give some existence and uniqueness results of solutions for the degenerate singular equation \eqref{problem} as well as some useful spectral properties. 
\subsection{Functional framework}
Let us assume \eqref{WD} and consider the Hilbert space $H_{\alpha}^{1, \mu}$ with its scalar product
\begin{align*}
\begin{cases}
H_{\alpha}^{1, \mu}(0,1):=\left\{z \in L^{2}(0,1) \cap H_{l o c}^{1}(0,1] : \int_{0}^{1}\left(x^{\alpha} z_{x}^{2}-\mu \frac{z^{2}}{x^{2-\alpha}}\right) d x<+\infty\right\},\\
\langle v, w\rangle_{H_\alpha^{1, \mu}}:=\int_0^1 v w+x^\alpha v_x w_x-\frac{\mu}{x^{2-\alpha}} v w d x . \end{cases}
\end{align*}
As mentioned in \cite{Van2011},the trace exists for any $u$ in $ H_\alpha^{1,\mu}(0,1)$ at $x=1$ and also for $x=0$ in the weak degenerate scenario.
This enables us to define the space
\begin{align*}
H_{\alpha, 0}^{1, \mu}(0,1):=\left\{z \in H_{\alpha}^{1, \mu}(0,1) \mid z(0)=z(1)=0\right\}.
\end{align*}
Since $C_{c}^{\infty}(0,1)$ is dense both in $L^{2}(0,1)$ and in $H_{\alpha, 0}^{1, \mu}(0,1)$, $H_{\alpha, 0}^{1, \mu}(0,1)$ is dense in $L^{2}(0,1)$.

Further, we define $H_{\alpha, 0}^{-1, \mu}(0,1)$ the dual space of $H_{\alpha, 0}^{1, \mu}(0,1)$ with respect to the pivot space $L^{2}(0,1)$, with the following norm
\begin{align*}
\|f\|_{H_{\alpha, 0}^{-1, \mu}(0,1)}:=\sup _{\|g\|_{H_{\alpha, 0}^{1, \mu}(0,1)}=1}\langle f, g\rangle_{H_{\alpha, 0}^{-1, \mu}(0,1), H_{\alpha, 0}^{1, \mu}(0,1)}.
\end{align*}
Thanks to the generalized Hardy inequality \eqref{hardy}, $H_{\alpha, 0}^{1, \mu}(0,1)=H_{\alpha, 0}^1(0,1)$ for the subcritical parameter case: $\mu<\mu(\alpha)$. But, for the critical case, one has (see \cite{VazZua2000} for $\alpha=0$ ):
\begin{align*}
H_{\alpha, 0}^1(0,1) \underset{\neq}{\subset} H_{\alpha, 0}^{1, \mu(\alpha)}(0,1) .
\end{align*}
\subsection{Well-posedness}
Let us introduce the following unbounded operator
\begin{align*}
\mathbf{A}: D(\mathbf{A}) \subset L^2(0,1) &\rightarrow L^2(0,1)\\
u &\mapsto \mathbf{A} u:=\left(x^\alpha u_x\right)_x+\frac{\mu}{x^{2-\alpha}}u,
\end{align*}
with its domain
\begin{align*}
D\left(\mathbf{A}\right):=\left\{u \in H_{\alpha, 0}^{1, \mu}(0,1) \cap H_{\mathrm{loc}}^2((0,1]): \left(x^\alpha u_x\right)_x+\frac{\mu}{x^{2-\alpha}} u \in L^2(0,1)\right\}.
\end{align*}
If $u \in D\left(\mathbf{A}\right)$, then the boundary conditions $u(0)=u(1)=0$ are satisfied. Moreover, the bilinear form associated to $\mathbf{A}$ is coercive in $H_{\alpha, 0}^{1,\mu}(0,1)$ (see \cite{Van2011} for more details about $\mu=\mu(\alpha)$ and $\mu < \mu(\alpha)$). This allows to say that problem \eqref{problem} is well-posed. In particular we have the following theorem.
\begin{theorem}{\cite[Theorem III.1.2]{lions1971}}
For any initial data $u_0$ in $L^2(0,1)$ and $h$ in $L^2(0, T)$, equation \eqref{problem} has a unique solution $u$, such that $u \in L^2\left(0, T ; H_\alpha^1(0,1)\right) \cap C^0\left([0, T] ; L^2(0,1)\right)$ and
$$
\begin{aligned}
\|u\|_{L^2\left(0, T ; H_{\alpha,0}^{1,\mu}(0,1)\right)}+\|u\|_{C^0\left([0, T] ; L^2(0,1)\right)}  \leq C\left(\left\|u_0\right\|_{L^2(0,1)}+\left\|\delta_{\overline{b}}\right\|_{H_{\alpha,0}^{-1,\mu}}\|h\|_{L^2(0, T)}\right),
\end{aligned}
$$
for a constant $C>0$.
\end{theorem}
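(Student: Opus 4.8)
The plan is to recast \eqref{problem} as an abstract parabolic Cauchy problem $u'(t)-\mathbf A u(t)=F(t)$, $u(0)=u_0$, in the Gelfand triple
\[
V:=H_{\alpha,0}^{1,\mu}(0,1)\hookrightarrow H:=L^2(0,1)\hookrightarrow V'=H_{\alpha,0}^{-1,\mu}(0,1),
\]
and then invoke the variational theory of parabolic equations, i.e.\ \cite[Theorem III.1.2]{lions1971}. First I would record the two structural facts already available from Section~\ref{CP}: the bilinear form $a(v,w):=\int_0^1\big(x^\alpha v_xw_x-\tfrac{\mu}{x^{2-\alpha}}vw\big)\,dx$ is continuous on $V\times V$, and — by the Hardy inequality \eqref{hardy}, together with the strict inclusion $H_{\alpha,0}^1(0,1)\subsetneq H_{\alpha,0}^{1,\mu(\alpha)}(0,1)$ in the critical case — there exist $\beta>0$ and $\lambda\ge 0$ with $a(v,v)+\lambda\|v\|_H^2\ge\beta\|v\|_V^2$ for all $v\in V$; that is, $-\mathbf A$ is associated with a $V$-coercive, continuous form, which is precisely the hypothesis of the abstract theorem.

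The key point is to check that the right-hand side is admissible, i.e.\ $F(t):=\delta_{\overline{b}}\,h(t)\in L^2(0,T;V')$. Since $\overline{b}\in(0,1)$ is an interior point, on a fixed neighbourhood $I\subset\subset(0,1)$ of $\overline{b}$ the weights $x^\alpha$ and $x^{2-\alpha}$ are bounded above and below, so every $v\in V$ belongs to $H^1(I)\hookrightarrow C(\overline I)$ with $|v(\overline{b})|\le C_I\|v\|_{H^1(I)}\le C\|v\|_V$; hence $\delta_{\overline{b}}\in V'$ and $\|\delta_{\overline{b}}h(t)\|_{V'}=\|\delta_{\overline{b}}\|_{V'}\,|h(t)|\in L^2(0,T)$ because $h\in L^2(0,T)$.

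With $u_0\in H$ and $F\in L^2(0,T;V')$, \cite[Theorem III.1.2]{lions1971} yields a unique $u\in L^2(0,T;V)\cap C([0,T];H)$ with $u'\in L^2(0,T;V')$ solving the equation, and $D(\mathbf A)\subset H^2_{\mathrm{loc}}((0,1])$ together with the Dirichlet trace being well defined on $V$ gives the boundary conditions in the stated sense. The quantitative bound then follows by the standard energy argument: pairing the equation with $u(t)$ gives $\tfrac12\tfrac{d}{dt}\|u\|_H^2+a(u,u)=\langle\delta_{\overline{b}},u\rangle\,h(t)$; using coercivity and Young's inequality $|\langle\delta_{\overline{b}},u\rangle h|\le\tfrac{\beta}{2}\|u\|_V^2+\tfrac{1}{2\beta}\|\delta_{\overline{b}}\|_{V'}^2|h|^2$, integrating in $t$ and applying Grönwall bounds $\|u\|_{C([0,T];H)}$, after which $\int_0^T\|u\|_V^2\,dt$ is absorbed, producing the asserted estimate with $\|\delta_{\overline{b}}\|_{H_{\alpha,0}^{-1,\mu}}$ in front of $\|h\|_{L^2(0,T)}$.

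The main obstacle is not the abstract step — which is classical — but the two weighted-space verifications feeding into it: (i) that point evaluation at $\overline{b}$ is bounded on $H_{\alpha,0}^{1,\mu}(0,1)$, so that $\delta_{\overline{b}}\in H_{\alpha,0}^{-1,\mu}(0,1)$, which relies on the $H^1_{\mathrm{loc}}(0,1]$ regularity built into the definition of $V$ and the interior location of $\overline{b}$; and (ii) the coercivity and closedness of $a$ in the critical case $\mu=\mu(\alpha)$, where \eqref{hardy} becomes an equality and one must appeal to the improved Hardy inequality reflected in the strict inclusion $H_{\alpha,0}^1(0,1)\subsetneq H_{\alpha,0}^{1,\mu(\alpha)}(0,1)$. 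Both ingredients are already available from \cite{Van2011, VazZua2000} as quoted in Section~\ref{CP}, so in the write-up they may be cited rather than reproved.
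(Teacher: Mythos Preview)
The paper does not supply a proof of this theorem at all: it is stated as a direct citation of \cite[Theorem III.1.2]{lions1971}, with the surrounding text only recording that $\mathbf{A}$ has coercive bilinear form on $H_{\alpha,0}^{1,\mu}(0,1)$ (referring to \cite{Van2011}) so that the problem is well-posed. Your proposal is a correct and complete verification that the hypotheses of Lions' abstract result are met in this weighted setting --- in particular the check that $\delta_{\overline{b}}\in H_{\alpha,0}^{-1,\mu}(0,1)$ via interior $H^1$-regularity --- so you are simply supplying the details the paper leaves to the citation.
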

\subsection{Spectral analysis of $\mathbf{A}$}\label{EE}
This part will be dedicated to treat the eigenvalue problem related to the operator $u \mapsto  - (x^\alpha u_x)_x - \frac{\mu}{x^{2-\alpha}}u$, which will be useful for the rest of this paper. 
	
As we can see in the next proposition \ref{spectrum}, the spectrum will be described in terms of Bessel functions $J_\nu$ (with order $\nu \in \mathbb{R}_{+}$ \cite{Watson}), which satisfy the following (ODE)
\begin{align*}
x^2 g''(x) + x g'(x) + (x^2 - \nu^2) g(x) = 0, \qquad  x\in (0, + \infty),
\end{align*}
and its zeros $(j_{\nu,n})_{n\geq 1}$ (see Figure \ref{fig:bessel}) that satisfy the following lower-upper bounds (\cite{LM2008}):
\begin{align}	
\forall \nu \in \Big[0, \dfrac{1}{2}\Big],\, \forall n \geq 1,\quad	\big( n + \frac{\nu}{2} - \frac{1}{4} \big) \pi \leq j_{\nu, n } \leq \big( n + \frac{\nu}{4} - \frac{1}{8} \big) \pi, \label{boundcase1}\\
\forall \nu \geq \dfrac{1}{2},\, \forall n \geq 1, \quad \big( n + \frac{\nu}{4} - \frac{1}{8} \big) \pi \leq j_{\nu, n } \leq \big( n + \frac{\nu}{2} - \frac{1}{4} \big) \pi.\label{boundcase2}
\end{align}
\begin{figure}[H]
\centering
\includegraphics[scale=0.5]{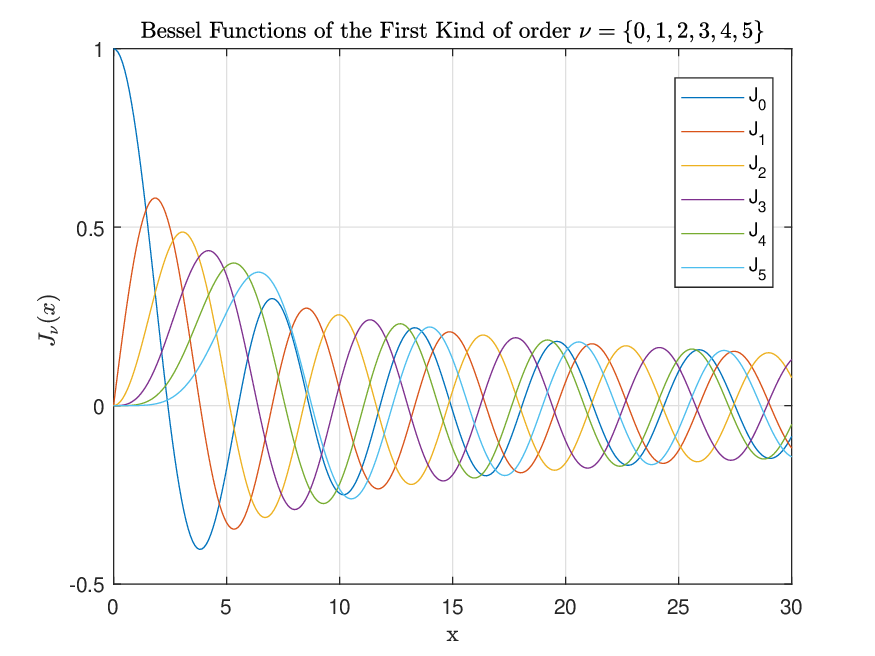}
\caption{Bessel functions of first kind.}
\centering 
\label{fig:bessel}
\end{figure}
As in \cite{biccsantavan2020}, we give the description of the eigenvalues and its associated eigenfunctions.
\begin{proposition}\label{spectrum} For any $\mu\leq  \mu(\alpha)$, the admissible spectrum $(\lambda, \Phi)$ related to the operator $u \mapsto  - (x^\alpha u_x)_x - \frac{\mu}{x^{2-\alpha}}u$, are the following:
\begin{equation}\label{eigenvalues}
\lambda_{\alpha, \mu, n} = \Big( \dfrac{2-\alpha}{2}  \Big)^2 (j_{\nu(\alpha,\mu),n})^2 \qquad  \forall n \geq 1,
\end{equation}
\begin{equation}\label{eigenfunctions}
\Phi_{\alpha,\mu, n}(x) =  \frac{\sqrt{2-\alpha}}{|J_{\nu(\alpha,\mu)}'(j_{\nu(\alpha,\mu), n})|}
x^{\frac{1-\alpha}{2}} J_{\nu(\alpha,\mu)}\Big(j_{\nu(\alpha,\mu), n}x^{\frac{2-\alpha}{2}}\Big), \quad n \geq 1 .
\end{equation}
Furthermore, $(\Phi_{\alpha,\mu, n})_{n\geq 1} $ constitutes an orthonormal basis of $L^2(0,1)$.
\end{proposition}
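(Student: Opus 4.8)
The plan is to diagonalise the operator $-(x^\alpha u_x)_x - \frac{\mu}{x^{2-\alpha}}u$ by reducing its eigenvalue equation to Bessel's equation. Since the bilinear form associated with $-\mathbf A$ is coercive on $H_{\alpha,0}^{1,\mu}(0,1)$, any eigenvalue is positive; so fix $\lambda>0$ and look for $\Phi\not\equiv 0$ in $D(\mathbf A)$ with $-(x^\alpha\Phi')' - \frac{\mu}{x^{2-\alpha}}\Phi = \lambda\Phi$, i.e., after multiplying by $x^{2-\alpha}$,
$$x^2\Phi'' + \alpha x\,\Phi' + \mu\,\Phi + \lambda\, x^{2-\alpha}\Phi = 0 .$$
First I would perform the change of unknown $\Phi(x) = x^{\frac{1-\alpha}{2}} v(x)$; a direct computation shows that the first-order term reduces to $x v'$ and the zero-order coefficient collapses to $\mu - \mu(\alpha)$, so that $v$ solves $x^2 v'' + x v' + \big(\lambda x^{2-\alpha} - (\mu(\alpha)-\mu)\big) v = 0$. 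Then the change of variable $s = \frac{2\sqrt\lambda}{2-\alpha}\, x^{\frac{2-\alpha}{2}}$, for which $x\frac{d}{dx} = \frac{2-\alpha}{2}\, s\frac{d}{ds}$, turns this into Bessel's equation $s^2 v_{ss} + s v_s + (s^2 - \nu^2)v = 0$ with order $\nu := \nu(\alpha,\mu) = \frac{2}{2-\alpha}\sqrt{\mu(\alpha)-\mu}$, which is real and nonnegative exactly under \eqref{WD}.

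Next I would select the admissible solutions. The general solution is a combination of $J_\nu(s)$ and a second, independent Bessel solution ($Y_\nu$, or $J_{-\nu}$ when $\nu\notin\mathbb N$), so
$$\Phi(x) = c_1\, x^{\frac{1-\alpha}{2}} J_\nu\!\big(\tfrac{2\sqrt\lambda}{2-\alpha}x^{\frac{2-\alpha}{2}}\big) + c_2\, x^{\frac{1-\alpha}{2}}\,\big(\text{second solution}\big).$$
Using $J_\nu(s)\sim s^\nu$ near $s=0$, the first branch behaves like $x^{\frac{1-\alpha}{2}+\nu\frac{2-\alpha}{2}}$ at $x=0$ (like $x^{\frac{1-\alpha}{2}}$ when $\nu=0$), and one checks that it belongs to $H_{\alpha,0}^{1,\mu}(0,1)$ and satisfies $\Phi(0)=0$; the second branch behaves like $x^{\frac{1-\alpha}{2}-\nu\frac{2-\alpha}{2}}$ (with an extra logarithmic factor in the critical case $\nu=0$), and one checks it does \emph{not} lie in the energy space $H_{\alpha,0}^{1,\mu}(0,1)$, hence not in $D(\mathbf A)$. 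This forces $c_2=0$. Imposing the remaining boundary condition $\Phi(1)=0$ then amounts to $J_\nu\!\big(\frac{2\sqrt\lambda}{2-\alpha}\big)=0$, i.e., $\frac{2\sqrt\lambda}{2-\alpha}=j_{\nu,n}$ for some $n\ge 1$; this yields $\lambda=\lambda_{\alpha,\mu,n}=\big(\frac{2-\alpha}{2}\big)^2 j_{\nu,n}^2$ as in \eqref{eigenvalues} and $\Phi_{\alpha,\mu,n}\propto x^{\frac{1-\alpha}{2}}J_\nu\big(j_{\nu,n}x^{\frac{2-\alpha}{2}}\big)$.

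It remains to normalise and to prove the basis property. Writing $\Phi_{\alpha,\mu,n}(x)=\kappa_n\, x^{\frac{1-\alpha}{2}}J_\nu\big(j_{\nu,n}x^{\frac{2-\alpha}{2}}\big)$ and substituting $t=x^{\frac{2-\alpha}{2}}$ turns $\int_0^1 x^{1-\alpha}J_\nu(j_{\nu,n}x^{\frac{2-\alpha}{2}})^2\,dx$ into $\frac{2}{2-\alpha}\int_0^1 t\,J_\nu(j_{\nu,n}t)^2\,dt$; using the classical identity $\int_0^1 t\,J_\nu(j_{\nu,n}t)^2\,dt=\frac12 J_\nu'(j_{\nu,n})^2$ gives $\|\Phi_{\alpha,\mu,n}\|_{L^2(0,1)}^2 = \frac{\kappa_n^2}{2-\alpha}J_\nu'(j_{\nu,n})^2$, so that $\|\Phi_{\alpha,\mu,n}\|_{L^2(0,1)}=1$ forces $\kappa_n=\sqrt{2-\alpha}/|J_\nu'(j_{\nu,n})|$, i.e., \eqref{eigenfunctions}. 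Finally, mutual orthogonality of the $\Phi_{\alpha,\mu,n}$ follows either from the same substitution together with Bessel orthogonality at the distinct zeros $j_{\nu,n}$, or from self-adjointness of $\mathbf A$, while completeness follows from the compact embedding $H_{\alpha,0}^{1,\mu}(0,1)\hookrightarrow L^2(0,1)$ (a consequence of \eqref{hardy} and of the coercivity of the form) and the spectral theorem applied to the self-adjoint operator $-\mathbf A$ with compact resolvent, exactly as in \cite{biccsantavan2020,Van2011}.

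I expect the main obstacle to be the rigorous exclusion of the second Bessel branch in the critical and near-critical regime $\mu\uparrow\mu(\alpha)$: there both candidate behaviours $x^{\frac{1-\alpha}{2}\pm\nu\frac{2-\alpha}{2}}$ are square-integrable near $x=0$, so one cannot discard the unwanted solution by mere $L^2$-integrability; instead one must work with the weighted energy norm of $H_{\alpha,0}^{1,\mu}(0,1)$ — using, in the critical case $\nu=0$, the exact cancellation of the two divergent leading-order contributions to $\int_0^1\big(x^\alpha\Phi_x^2-\frac{\mu}{x^{2-\alpha}}\Phi^2\big)dx$ that holds for the $J_0$-branch but fails for the logarithmic $Y_0$-branch — together with the characterisation of $D(\mathbf A)$ and the weak-degeneracy trace at $x=0$; this is where the constraint $\alpha\in[0,1)$ in \eqref{WD} is genuinely used.
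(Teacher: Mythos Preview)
The paper does not actually prove Proposition~\ref{spectrum}: it merely states the result, prefacing it with ``As in \cite{biccsantavan2020}, we give the description of the eigenvalues and its associated eigenfunctions,'' and provides no argument of its own. So there is no in-paper proof to compare against.

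That said, your proposal is correct and is precisely the standard derivation one finds in the references the paper cites (e.g.\ \cite{biccsantavan2020}). Your two changes of variables --- $\Phi(x)=x^{\frac{1-\alpha}{2}}v(x)$ followed by $s=\tfrac{2\sqrt\lambda}{2-\alpha}x^{\frac{2-\alpha}{2}}$ --- are exactly the ones that reduce the eigenvalue problem to Bessel's equation of order $\nu(\alpha,\mu)$; your exclusion of the second Bessel branch via membership in $H_{\alpha,0}^{1,\mu}(0,1)$ (rather than mere $L^2$-integrability) is the right mechanism, and your normalisation computation using $\int_0^1 t\,J_\nu(j_{\nu,n}t)^2\,dt=\tfrac12 J_\nu'(j_{\nu,n})^2$ is accurate. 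The completeness argument via compact resolvent is also the standard route. Your closing remark about the delicacy of excluding the $Y_0$/$J_{-\nu}$ branch in the critical and near-critical regime is well taken and is indeed where the analysis requires care; the paper sidesteps this entirely by outsourcing the whole proposition to \cite{biccsantavan2020}.
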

Before ending this section, we present Lemma \ref{gapresult}, which will be proved in \ref{appendix:b}.
\begin{lemma}\label{gapresult}  
The sequence $(\lambda_{\alpha,\mu, k})_{k\geq 1}$ given by \eqref{eigenvalues}, satisfies:
\begin{itemize}
\item $\displaystyle\sum_{n\geq 1}  \frac{1}{\lambda_{\alpha,\mu,n}}$ is convergent.
\item  $\forall n,m \in \mathbb{N}^\star$, $\exists \rho > 0$ (constant):
\begin{align}\label{gap}
 | \lambda_{\alpha,\mu, n} - \lambda_{\alpha,\mu, m} | \geq \rho |n^2 - m^2|. \tag{gap}
\end{align}
\end{itemize}
\end{lemma}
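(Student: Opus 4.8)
The plan is to establish both bullet points from the explicit formula \eqref{eigenvalues}, namely $\lambda_{\alpha,\mu,n} = \bigl(\tfrac{2-\alpha}{2}\bigr)^2 j_{\nu,n}^2$ with $\nu = \nu(\alpha,\mu)$, by transferring everything to the asymptotics of the Bessel zeros $j_{\nu,n}$. First I would record the consequence of the two-sided bounds \eqref{boundcase1}--\eqref{boundcase2}: in either regime for $\nu$ one has $j_{\nu,n} = n\pi + O(1)$ with the $O(1)$ term lying in a fixed bounded interval, and moreover $j_{\nu,n}$ is increasing in $n$. From this, $\lambda_{\alpha,\mu,n}$ behaves like a constant times $n^2\pi^2$ up to lower-order terms, so $\sum_n 1/\lambda_{\alpha,\mu,n}$ is dominated by a convergent $p$-series with $p=2$; this settles the first bullet.

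For the gap estimate \eqref{gap}, write $c = \bigl(\tfrac{2-\alpha}{2}\bigr)^2$ so that $\lambda_{\alpha,\mu,n} - \lambda_{\alpha,\mu,m} = c\,(j_{\nu,n}^2 - j_{\nu,m}^2) = c\,(j_{\nu,n} - j_{\nu,m})(j_{\nu,n} + j_{\nu,m})$. The idea is to bound each factor from below by something proportional to $|n-m|$ and to $(n+m)$ respectively, whose product is comparable to $|n^2 - m^2|$. For the sum factor, the lower bounds in \eqref{boundcase1}--\eqref{boundcase2} give $j_{\nu,n} + j_{\nu,m} \geq (n+m)\pi + (\text{bounded})$, hence $j_{\nu,n} + j_{\nu,m} \geq c_1 (n+m)$ for some $c_1 > 0$ once one checks the bounded remainder does not spoil positivity (true since $n,m \geq 1$; the worst constant term is handled by a finite check or by absorbing it). For the difference factor, assuming $n > m$, the bounds give $j_{\nu,n} - j_{\nu,m} \geq \bigl((n-m) + (\text{correction})\bigr)\pi$ where the correction terms from the upper bound on $j_{\nu,m}$ and the lower bound on $j_{\nu,n}$ combine to something like $-\tfrac{\nu}{4} + \tfrac{1}{8}$ or $-\tfrac{\nu}{4}+\tfrac18$ depending on the case, a fixed constant; so $j_{\nu,n} - j_{\nu,m} \geq c_2(n-m)$ for all $n-m$ large enough, and the finitely many small gaps $n - m \in \{1,\dots,N_0\}$ are handled by strict monotonicity of $j_{\nu,\cdot}$ (each such difference is a positive number bounded below by its minimum over the finite set of relevant index pairs — or more cleanly, by noting $j_{\nu,n}-j_{\nu,m}$ is a sum of $n-m$ consecutive gaps each bounded below by the Bessel interlacing/monotonicity). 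Multiplying, $|\lambda_{\alpha,\mu,n} - \lambda_{\alpha,\mu,m}| \geq c\,c_1 c_2 (n+m)(n-m) = \rho|n^2-m^2|$.

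I expect the main obstacle to be the clean handling of the ``correction constants'' in the difference factor: the crude subtraction of an upper bound for $j_{\nu,m}$ from a lower bound for $j_{\nu,n}$ loses a fixed additive constant depending on $\nu$, which is only harmless when $n-m$ exceeds some threshold. The fix is the standard one — split into $n - m \geq N_0(\nu)$, treated by the asymptotic bounds, and $1 \leq n - m < N_0(\nu)$, treated by the monotonicity $j_{\nu,m} < j_{\nu,m+1} < \dots < j_{\nu,n}$ together with the fact that consecutive Bessel zeros satisfy $j_{\nu,k+1} - j_{\nu,k} \geq \pi$ (or at least a uniform positive lower bound, which again follows from \eqref{boundcase1}--\eqref{boundcase2}), so that $j_{\nu,n} - j_{\nu,m} \geq (n-m)\cdot(\text{that bound})$. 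One also needs to double-check that $\nu(\alpha,\mu) = \tfrac{2}{2-\alpha}\sqrt{\mu(\alpha)-\mu}$ is a nonnegative real under the hypothesis \eqref{WD} — which it is, since $\mu \leq \mu(\alpha) = (1-\alpha)^2/4$ — so that exactly one of \eqref{boundcase1}, \eqref{boundcase2} applies (with a harmless overlap at $\nu = \tfrac12$), and the constants $c_1, c_2, \rho$ can be taken uniform in $n,m$. The convergence of $\sum 1/\lambda_{\alpha,\mu,n}$ is then immediate and causes no difficulty.
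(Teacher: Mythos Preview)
Your plan is correct and essentially unpacks what the paper only cites: the paper's own proof in Appendix~\ref{appendix:b} is a two-line sketch that invokes \cite[Proposition~7.8]{KL2005} together with \eqref{boundcase1}--\eqref{boundcase2} and simply records the resulting constants $\rho=\tfrac{7}{64}\pi^2(2-\alpha)^2$ for $\nu\le 1/2$ and $\rho=\bigl(\tfrac{2-\alpha}{2}\bigr)^2\pi^2$ for $\nu\ge 1/2$. Your factorisation $\lambda_n-\lambda_m=c\,(j_{\nu,n}-j_{\nu,m})(j_{\nu,n}+j_{\nu,m})$ with separate lower bounds on each factor is precisely the elementary mechanism behind that citation; in particular, for $\nu\le 1/2$ the direct subtraction of the bounds in \eqref{boundcase1} already gives $j_{\nu,n}-j_{\nu,m}\ge (n-m-\tfrac18)\pi\ge \tfrac{7}{8}(n-m)\pi$, so no threshold $N_0$ is needed there.

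One point to tighten: in the regime $\nu\ge 1/2$ your parenthetical claim that a uniform positive lower bound on consecutive gaps ``again follows from \eqref{boundcase1}--\eqref{boundcase2}'' is not correct. Subtracting the upper bound for $j_{\nu,k}$ from the lower bound for $j_{\nu,k+1}$ in \eqref{boundcase2} yields $(\tfrac{9}{8}-\tfrac{\nu}{4})\pi$, which is negative once $\nu>\tfrac{9}{2}$; and \eqref{WD} places no lower bound on $\mu$, so $\nu$ is unbounded above. What you actually need is the separate classical fact that for $\nu\ge\tfrac12$ the gaps $j_{\nu,k+1}-j_{\nu,k}$ decrease to $\pi$, hence $j_{\nu,k+1}-j_{\nu,k}\ge\pi$ for all $k$ (this is in the same circle of results as \cite{LM2008}, and is what \cite[Proposition~7.8]{KL2005} ultimately relies on). With that in hand, telescoping gives $j_{\nu,n}-j_{\nu,m}\ge(n-m)\pi$ directly, no splitting needed; combining with $j_{\nu,n}+j_{\nu,m}\ge(n+m)\pi$ (immediate from the lower bound in \eqref{boundcase2} since $\nu/4-1/8\ge 0$) reproduces exactly the paper's constant $\rho=\bigl(\tfrac{2-\alpha}{2}\bigr)^2\pi^2$.
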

\section{Pointwise approximate controllability}
\label{PAC}
The scenario of this section will be around Theorem \ref{acresult} and its proof.
Due to Proposition \ref{prop1}, approximate controllability is equivalent to the property \eqref{ucproperty} for the adjoint equation \eqref{adjproblem}.

Before going any further, we first state the following tool result:
\begin{theorem}{\cite[Lemma 3.1, Existence of biorthogonal family]{FBT}}\label{biorth}
Let $(\lambda_{\alpha,\mu,k})_{k\geq 1}$ as in \eqref{eigenvalues} and $T\in \big(0,+\infty\big]$ fixed.
Then, there exists $\{q_{\alpha,\mu, k}\}_{k\geq 1}$ in $L^2(0,T)$ biorthogonal to $\{e^{-\lambda_{\alpha,\mu,k} t}\}_{k\geq 1}$.
Moreover, we have
\begin{equation}\label{l2bound}
 \forall \varepsilon >0, \exists C_{\varepsilon,T}>0: \, \|q_{\alpha,\mu,k}\|_{L^2(0,T)}
\leq C_{\varepsilon,T} e^{\varepsilon \lambda_{\alpha,\mu,k}}, \quad \forall k\geq 1.
\end{equation}
\end{theorem}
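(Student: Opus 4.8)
\noindent\emph{Proof strategy.} The plan is to obtain Theorem~\ref{biorth} as a direct consequence of the abstract result Theorem~\ref{thm-biorth}, the only genuine input being the spectral information in Lemma~\ref{gapresult}. First I would verify that the eigenvalue sequence $(\lambda_{\alpha,\mu,k})_{k\ge1}$ satisfies the hypotheses~\eqref{separability}. The convergence $\sum_{n\ge1}1/\lambda_{\alpha,\mu,n}<+\infty$ is precisely the first item of Lemma~\ref{gapresult}. For the separation, Lemma~\ref{gapresult} provides the stronger bound \eqref{gap}; and for $n\ne m$ with $n,m\ge1$ one has $|n^2-m^2|=(n+m)|n-m|\ge|n-m|$, so that $|\lambda_{\alpha,\mu,n}-\lambda_{\alpha,\mu,m}|\ge\rho|n-m|$ with the same $\rho$ (for $n=m$ the inequality is trivial). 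Hence $(\lambda_{\alpha,\mu,k})_{k\ge1}$ fulfills \eqref{separability}.

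For finite $T$ I would then apply Theorem~\ref{thm-biorth} with $\Lambda_k=\lambda_{\alpha,\mu,k}$, which yields a family $\{q_{\alpha,\mu,k}\}_{k\ge1}\subset L^2(0,T)$ biorthogonal to $\{e^{-\lambda_{\alpha,\mu,k}t}\}_{k\ge1}$ together with the estimate \eqref{bound}, i.e. exactly \eqref{l2bound}. The endpoint case $T=+\infty$ reduces to the finite one: fix some $T_0\in(0,+\infty)$, construct the family on $(0,T_0)$ as above, and extend every $q_{\alpha,\mu,k}$ by zero on $(T_0,+\infty)$; since $\int_0^{+\infty}q_{\alpha,\mu,k}(t)e^{-\lambda_{\alpha,\mu,l}t}\,dt=\int_0^{T_0}q_{\alpha,\mu,k}(t)e^{-\lambda_{\alpha,\mu,l}t}\,dt=\delta_{kl}$ and $\|q_{\alpha,\mu,k}\|_{L^2(0,+\infty)}=\|q_{\alpha,\mu,k}\|_{L^2(0,T_0)}$, biorthogonality and the exponential bound are unchanged.

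The real difficulty is entirely hidden inside Theorem~\ref{thm-biorth} (and thus in \cite{ABGT2011,FBT}), whose proof is complex-analytic: one builds the biorthogonal elements by combining an entire function of exponential type less than $T$---essentially a Weierstrass-type product such as $\prod_{k\ge1}(1-s/\lambda_{\alpha,\mu,k})$, whose convergence is guaranteed by $\sum 1/\lambda_{\alpha,\mu,k}<+\infty$---with a Paley--Wiener / Fourier--Laplace representation, and the weak bound $\|q_{\alpha,\mu,k}\|_{L^2}\le C_\varepsilon e^{\varepsilon\lambda_{\alpha,\mu,k}}$ comes from sharp estimates on that product and on its values at the nodes $\lambda_{\alpha,\mu,k}$. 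These in turn rest on the growth $\sim\sqrt{\lambda}$ of the counting function of the spectrum---here $\lambda_{\alpha,\mu,k}\sim\bigl(\tfrac{2-\alpha}{2}\bigr)^2(k\pi)^2$ by \eqref{eigenvalues} and the Bessel-zero bounds \eqref{boundcase1}--\eqref{boundcase2}---while the separation in \eqref{separability} is what prevents the nodes from clustering and the product from degenerating. Since in this paper the statement is quoted directly from \cite{FBT}, I would only display the reduction above and refer to that work for the construction itself.
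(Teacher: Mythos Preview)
Your reduction is correct and matches the paper's approach: the paper does not give an independent proof of Theorem~\ref{biorth} but simply quotes it from \cite{FBT}, having first recorded the abstract result Theorem~\ref{thm-biorth} and the spectral Lemma~\ref{gapresult} precisely so that the reader can make the deduction you spell out. Your verification of~\eqref{separability} from Lemma~\ref{gapresult} (including the observation $|n^2-m^2|\ge|n-m|$) and your extension-by-zero argument for $T=+\infty$ are both sound; the paper leaves these details implicit.
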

\begin{proof}
\noindent\textbf{Sufficient condition:} Suppose \eqref{cond1} is valid and prove the property \eqref{ucproperty}. To do so, considering $\varphi_0 \in L^2(0,1)$ and suppose $\varphi$ its corresponding solution of \eqref{adjproblem} fulfils:
\begin{align}\label{Hyp ac}
\varphi(t,\overline{b})=0, \qquad \text{for all }t\in (0,T).
\end{align}
We can expand the solution $\varphi$ of \eqref{adjproblem} in $(\Phi_{\alpha,\mu, k})_{k\geq 1}$, one has
\begin{align*}
\varphi(t,x)=\sum_{k\geq 1} \varphi^0_{\alpha,\mu,k} e^{-\lambda_{\alpha,\mu,k}(T-t)} \Phi_{\alpha,\mu, k}(x).
\end{align*}
As consequence of \eqref{Hyp ac} and Theorem \ref{biorth}, the following identity holds true:
\begin{align*}
\varphi^0_{\alpha,\mu,k} \Phi_{\alpha,\mu, k}(\overline{b})=0,\quad \forall k\geq1,
\end{align*}

thus $\varphi_0=0$. Then equation \eqref{problem} is approximately controllable  by virtue of Proposition \ref{prop1}.

\smallskip
\noindent\textbf{Necessary condition:}
We assume that hypothesis \eqref{cond1} doesn't hold. One has
\begin{align*}
\exists n_0> k_0 \geq 1,\text{ for which: } \overline{b}=\left(\frac{j_{\nu(\alpha,\mu), k_0}}{j_{\nu(\alpha,\mu), n_0}}\right)^{\frac{2}{2-\alpha}}.
\end{align*}
The solution $\varphi$ of equation \eqref{adjproblem} associated to any initial data $\varphi_0$ can be explicitly given by
\begin{align*}
\varphi(t,x)=\sum_{k\geq 1} \varphi^0_{\alpha,\mu,k} e^{-\lambda_{\alpha,\mu,k}(T-t)} \Phi_{\alpha,\mu, k}(x).
\end{align*}
Then for $\varphi_0= \Phi_{\alpha,\mu, n_0} \in L^2(0,1)$, the solution of \eqref{adjproblem} related to this choice satisfy
\begin{align*}
\varphi_{\alpha,\mu,n_0}(t,x) =  e^{-\lambda_{\alpha,\mu, n_0}(T-t)} \Phi_{\alpha,\mu, n_0}(x),\qquad (t,x)\in \omega_T.
\end{align*}
this contradicts the assumption, since $\varphi_0\neq 0$. This ends the proof.
\end{proof}

\section{Pointwise null controllability}
\label{PNC}
The negative result of null controllability will be presented in the first part \ref{negativecontrol}. On the other hand, the second part \ref{positivecontrol} deals with the positive result, where the solution of the equation \eqref{problem} can be steered to zero with respect to the minimal time $T^{(\alpha ,\mu)}(\overline{b})$. Then the proof of Theorem \ref{ncresult} will be completed. To this aim, let us assume that $\overline{b}$ fulfills \eqref{cond1}.
\subsection{Negative result}\label{negativecontrol}
Assuming that $T\in (0,T^{(\alpha, \mu)}_0(\overline{b}))$, where $T^{(\alpha, \mu)}_0(\overline{b})$ is given by \eqref{mintime} and recall that condition \eqref{cond1} holds. Using some ideas from \cite{Khodja2016} and arguing by contradiction,
let us suppose that equation \eqref{problem} is null controllable at $T<T^{(\alpha, \mu)}_0(\overline{b})$, then the observability inequality \eqref{oi1} is satisfied for any $\varphi$ solution of adjoint equation \eqref{adjproblem} with respect to a positive constant $C$ (see Remark \ref{remarkobs}).

Considering $\varphi_{\alpha,\mu,k}(t,x) = e^{-\lambda_{\alpha,\mu, k}(T- t)}  \Phi_{\alpha,\mu, k}(x)$, for all $k\geq 1$, the solution of \eqref{adjproblem} related to $\varphi_0 = \Phi_{\alpha,\mu, k}$.
Then, \eqref{oi1} becomes
\begin{align*}
e^{-2 \lambda_{\alpha,\mu, k} T} &\leq C  \Phi_{\alpha,\mu, k}(\overline{b})^2  \int_0^T e^{-2 \lambda_{\alpha,\mu, k} (T-t)}\, dt \nonumber\\
&\leq C \frac{1}{2\lambda_{\alpha,\mu, k}}\big(1- e^{-2 \lambda_{\alpha,\mu, k}T}\big)\Phi_{\alpha,\mu, k}(\overline{b})^2 \nonumber\\
& \leq \frac{C}{2\lambda_{\alpha,\mu, 1}} \Phi_{\alpha,\mu, k}(\overline{b})^2,\quad \forall k\geq 1,
\end{align*}
which implies 
\begin{align}\label{oi2}
1 \leq  C' e^{2 \lambda_{\alpha,\mu, k} T}  \Phi_{\alpha,\mu, k}(\overline{b})^2\qquad \forall k\geq 1,
\end{align}
where $C'$ does not depend on $k$. Moreover, from the definition of $T^{(\alpha, \mu)}_0(\overline{b})$, one has
\begin{align*}
\exists \{k_n\}_{n\geq1} \, \text{of positive integers:}\quad T^{(\alpha, \mu)}_0(\overline{b})= \lim_{n \to + \infty} -  \frac{\log(|\Phi_{\alpha,\mu, k_n}(\overline{b})| )}{\lambda_{\alpha,\mu, k_n}}.
\end{align*}
If $0<T^{(\alpha, \mu)}_0(\overline{b})< +\infty$, then we can obtain
\begin{align*}
\forall \varepsilon>0, \exists n_{\varepsilon}\geq 1, \, \text{such that:}\quad
T^{(\alpha, \mu)}_0(\overline{b}) - \varepsilon \leq -\frac{\log(\big| \Phi_{\alpha,\mu, k_n}(\overline{b})\big| )}{\lambda_{\alpha,\mu, k_n}}, \qquad \forall n \geq n_{\varepsilon}.
\end{align*}
Thus \eqref{oi2} implies
\begin{align*}
1 \leq  C' e^{- 2\lambda_{\alpha,\mu, k_n} (T^{(\alpha, \mu)}_0(\overline{b}) - T -\varepsilon)}, \qquad \forall n \geq n_{\varepsilon}.
\end{align*}
Then we obtain a contradiction with $\varepsilon\in (0, T-T^{(\alpha, \mu)}_0(\overline{b}))$. This confirms the negative result of Theorem \ref{ncresult}.

\subsection{Positive result}\label{positivecontrol}
In this part, we shall find a force control $h\in L^{2}(0,T)$, such that the null controllability condition \eqref{ncp} holds true at $T> 0$ ($T> T^{(\alpha, \mu)}_0(\overline{b})$) for any solution of \eqref{problem} with respect to the initial data $u_0 \in L^{2}(0,1)$. To achieve this, we transform the null controllability result of equation \eqref{problem} into the moment problem \eqref{momentpb0} (see \cite{fatrus1971} for more details).

Firstly, the solution $u$ of \eqref{problem} associated with $u_0(x) = \sum\limits_{k\geq 1}u^0_{\alpha,\mu,k}\Phi_{\alpha,\mu, k}(x)$, is as follows
\begin{equation*}
u(t,x) = \sum_{k\geq 1}u_{\alpha,\mu,k}(t)\Phi_{\alpha,\mu, k}(x),\quad \text{where}\quad
\sum_{k\geq 1}u_{\alpha,\mu,k}^2(t)<+\infty.
\end{equation*}
Characterization \eqref{ncp} of null controllability, can be reformulated as:
\begin{equation}\label{ncp1}
u_{\alpha,\mu,k}(T)=0,\quad \text{for all }k\geq 1.
\end{equation}
The following adjoint problem:
\begin{equation}\label{adjpb}
\left\{
\begin{array}{lll}
(\varphi_{\alpha,\mu,k})_t + (x^\alpha (\varphi_{\alpha,\mu,k})_{x})_x +\frac{\mu}{x^{2-\alpha}}\varphi_{\alpha,\mu,k} =0 , & & \text{ in } Q,\\
\varphi_{\alpha,\mu,k}(t, 0)=\varphi_{\alpha,\mu,k}(t,1)= 0, & & \text{ on } (0,T),\\
\varphi_{\alpha,\mu,k}(T, x)=\Phi_{\alpha,\mu, k}(x), & & \text{ in } (0,1),
\end{array}
\right.
\end{equation}
has the solution $\varphi_{\alpha,\mu,k}(t,x):= e^{-\lambda_{\alpha,\mu,k}(T-t)} \Phi_{\alpha,\mu, k}(x)$. Then from \eqref{problem} and \eqref{adjpb}, one has
\begin{footnotesize}
\begin{align*}
\Phi_{\alpha,\mu, k}(\overline{b}) \int_0^T  h(t) e^{-\lambda_{\alpha,\mu, k}(T- t)}\,dt
&\quad= \int_{0}^{1}\varphi_{\alpha,\mu,k}u|_{0}^{T}\,dx + \int_{0}^{T}(\varphi_{\alpha,\mu,k})_x u_x|_{0}^{1}\,dt - \int_{0}^{T}u_x (\varphi_{\alpha,\mu,k})_x|_{0}^{1}\,dt\\
&\quad= \int_{0}^{1} u(T,x)\Phi_{\alpha,\mu, k}(x)\,dx - \int_{0}^{1} u(0,x)\Phi_{\alpha,\mu, k}(x)e^{-\lambda_{\alpha,\mu,k}T}\,dx  \\
&\quad= u_{\alpha,\mu,k}(T) - u^{0}_{\alpha,\mu,k} e^{-\lambda_{\alpha,\mu,k}T}.
\end{align*}
\end{footnotesize}
Null controllability issue for equation \eqref{problem} can be transformed to the problem given by:
\begin{align}\label{momentpb0}
\text{Find } h\in L^2(0,T):\, \Phi_{\alpha,\mu, k}(\overline{b})\int_0^T  h(t) e^{-\lambda_{\alpha,\mu, k}(T- t)}\,dt=- u^{0}_{\alpha,\mu,k} e^{-\lambda_{\alpha,\mu, k} T}, \, \forall k \geq 1.\tag{MP}
\end{align}
The following condition fulfilled by \eqref{cond1}, is necessary to solve \eqref{momentpb0} 
\begin{align*}
\Phi_{\alpha,\mu, k}(\overline{b}) \neq 0,\quad \forall k\geq 1.
\end{align*}
Considering $v(t):=h(T-t)\in L^2(0,T)$, \eqref{momentpb0} reads as follows:
\begin{equation}\label{momentpb1}
\text{Find } h\in L^2(0,T):\, \int_0^T v(t) e^{-\lambda_{\alpha,\mu,k}t}\,dt
= -\dfrac{e^{-\lambda_{\alpha,\mu, k} T} u^{0}_{\alpha,\mu,k}}{\Phi_{\alpha,\mu, k}(\overline{b})},\quad \forall k \geq 1.
\end{equation}
Using Theorem \ref{biorth}, then we are looking for a solution $v(t)=h(T-t)$ to \eqref{momentpb1} as
$v(t)= \sum_{k\geq 1} u_k q_{\alpha,\mu, k}(t),$
for some unknown coefficients $v_k\in \mathbb{R}$ ($k\geq 1$). Then
\begin{equation*}
v_k= -\dfrac{e^{-\lambda_{\alpha,\mu ,k} T}u^{0}_{\alpha,\mu,k}}{\Phi_{\alpha,\mu, k}(\overline{b})},
\end{equation*}
and thus, we define a formal solution $h$ to the moment problem \eqref{momentpb1} as
\begin{equation}\label{control}
h(t):=v(T-t)= - \sum_{k\geq 1}  \dfrac{e^{-\lambda_{\alpha,\mu ,k} T}u^{0}_{\alpha,\mu,k}}{\Phi_{\alpha,\mu, k}(\overline{b})} q_{\alpha,\mu, k}(T-t).
\end{equation}
We have now to prove that $h \in L^2(0,T)$. For any $\varepsilon > 0$ fixed, by definition of $T^{(\alpha, \mu)}_0(\overline{b})$ (see \eqref{mintime}), one has 
\begin{equation*}
\exists C_{\alpha,\mu, \varepsilon}>0 , \, \text{such that: } \dfrac{1}{|\Phi_{\alpha,\mu, k}(\overline{b})|}
\leq C_{\alpha,\mu, \varepsilon} e^{\lambda_{\alpha,\mu, k} ( T^{(\alpha, \mu)}_0(\overline{b}) +\varepsilon)}, \qquad \forall k \geq 1.
\end{equation*}
Using this last inequality and taking into account the bound \eqref{l2bound}, we get
\begin{align*}
\left\|\dfrac{e^{-\lambda_{\alpha,\mu ,k} T}u^{0}_{\alpha,\mu,k}}{\Phi_{\alpha,\mu, k}(\overline{b})} q_{\mu, k}\right\|_{L^2(0,T)}
\leq C_{\alpha,\mu, \varepsilon, T} \|u_0\|_{L^2(0,1)} e^{-\lambda_{\alpha,\mu, k} (T - T^{(\alpha, \mu)}_0(\overline{b}) -2\varepsilon )},
\end{align*}
where $C_{\alpha,\mu, \varepsilon, T}>0$. Choosing $\varepsilon = \frac{T- T^{(\alpha, \mu)}_0(\overline{b})}{4}$ in the estimate above, then
\begin{equation*}
\exists C_{\alpha,\mu, T}'>0, \quad \left\|\dfrac{e^{-\lambda_{\alpha,\mu ,k} T}u^{0}_{\alpha,\mu,k}}{\Phi_{\alpha,\mu, k}(\overline{b})} q_{\alpha,\mu, k}\right\|_{L^2(0,T)}  \leq C_{\alpha,\mu, T}' \|u_0\|_{L^2(0,1)} e^{-\lambda_{\alpha,\mu, k} (T - T^{(\alpha, \mu)}_0(\overline{b}))/2}.
\end{equation*}
Since we have
\begin{footnotesize}
\begin{align*}
\sum_{k\geq 1}& e^{-\lambda_{\alpha,\mu, k} (T - T^{(\alpha, \mu)}_0(\overline{b}))/2}
\quad = \frac{2}{T - T^{(\alpha, \mu)}_0(\overline{b})} \sum_{k\geq 1}\Big(\lambda_{\alpha,\mu, k} \frac{T - T^{(\alpha, \mu)}_0(\overline{b})}{2}
e^{-\lambda_{\alpha,\mu, k} (T - T^{(\alpha, \mu)}_0(\overline{b}))/2}\Big) \frac{1}{\lambda_{\alpha,\mu, k}} < +\infty.\end{align*}
\end{footnotesize}
One has
\begin{equation*}
\sum_{k\geq 1} \left\|\dfrac{e^{-\lambda_{\alpha,\mu ,k} T}u^{0}_{\alpha,\mu,k}}{\Phi_{\alpha,\mu, k}(\overline{b})} q_{\alpha,\mu, k}\right\|_{L^2(0,T)}
\leq C_{\alpha,\mu, T} \|u_0\|_{L^2(0,1)} \sum_{k\geq 1} e^{-\lambda_{\alpha,\mu, k} (T - T^{(\alpha, \mu)}_0(\overline{b}))/2} < + \infty.
\end{equation*}
Theorem \ref{ncresult} is therefore proved, since the last inequality ensures that $h\in L^2(0,T).$

\section{ Finite element scheme for the state equation based on B-splines}
In this part, we review some results around spline spaces \cite{B01} and spline collocation framework \cite{BLL19}, providing necessary background to make the paper self-contained.

Consider the set of breakpoints $\X := \left\lbrace 0=x_0 < x_1 \ldots < x_n=1 \right\rbrace$. Let $S_{d, \X}$ denote the space of piecewise polynomial functions of degree less than or equal to $d$ associated with the partition of the interval $[0, 1]$ induced by $\X$. Let $\M := \left\lbrace m_i \right\rbrace_{i=1}^{n-1}$ be a set of values with entries $m_i \leq d+1$. We define the subspace $S_{d,\X, \M}$ as the set of functions in $S_{d, \X}$ that are $C^{m_i-1}$-smooth at $x_i$, for $i=1,\ldots, n-1$. The dimension of $S_{d,\X, \M}$ is equal to

\[
N :=n(d+1)- \sum_ {i=1}^{n-1} m_i
\]
We denote by $\T := \left\lbrace t_i \right\rbrace_{i=1}^{N+d+1}$ the breakpoint sequence formed by the breakpoints $x_i$ repeated $d+1-m_i$ times, where $t_1 \leq t_2 \leq \ldots \leq t_{d+1} \leq x_0$ and $x_n \leq t_{N+1} \leq \ldots \leq t_{N+d+1}$. Here, the entries $d+1-m_i$ and $m_i-1$ represent the knot multiplicity of $x_i$ and the spline smoothness order at $x_i$, respectively.

A normalized B-spline (e.g. non-negative partition of unity) of degree $d$ associated with $\T$, is defined as:
\[
B_{k,d,\T} :=(t_{k+d+1}-t_k ) \left[  t_k, \ldots ,t_{k+d+1} \right] (.-x)^d_{+},
\]
where $\left[  y_0, \ldots ,y_d \right] f$ stands for the divided difference of the function $f$ at the points $ y_0, \ldots ,y_d$, and $y_+ =\max (0,y)$.

The B-splines $B_{k,d,\T}$, $k=1,\ldots, N$, possess interesting properties. In the following result, we summarize some of them \cite{B01}.
\begin{proposition}
The basis functions $B_{k,d,\T}$, $k=1,\ldots , N$ meet the following properties.
\begin{itemize}
\item[1)] Non-negativity: $B_{k,d,\T} (x) \geq 0$,  $\forall x\in \mathbb{R}$.
\item[2)] Local support: $B_{k,d,\T}$ is non-negative in $[t_k, t_{k+d+1}]$ and equal to zero outside.
\item[3)] Partition of unity: $\sum_k B_{k,d,\T} =1$.
\item[4)] The B-splines $B_{k,d,\T}$, $k=1,\ldots , N$ are linearly independent on $\mathbb{R}$.
\end{itemize}
\end{proposition}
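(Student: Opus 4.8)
The plan is to reduce all four statements to the Cox--de Boor recurrence and to Marsden's identity, both of which are elementary consequences of the defining divided-difference formula. First I would establish the recurrence
\[
B_{k,d,\T}(x)=\frac{x-t_k}{t_{k+d}-t_k}\,B_{k,d-1,\T}(x)+\frac{t_{k+d+1}-x}{t_{k+d+1}-t_{k+1}}\,B_{k+1,d-1,\T}(x),
\]
with the convention that a summand is dropped when its denominator vanishes, together with the base case $B_{k,0,\T}=\mathbf{1}_{[t_k,t_{k+1})}$, which is immediate from the defining formula. The recurrence follows by combining the recursive definition of divided differences with the Leibniz rule $[y_0,\dots,y_m](fg)=\sum_{r=0}^{m}[y_0,\dots,y_r]f\cdot[y_r,\dots,y_m]g$ applied to the factorization $(\cdot-x)_+^d=(\cdot-x)(\cdot-x)_+^{d-1}$, in which all but two terms disappear because $t\mapsto t-x$ is affine.

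\textbf{Properties 1) and 2).} The support claim can be read directly off the defining formula: if $x\ge t_{k+d+1}$ then $(t_j-x)_+^d=0$ for every knot $t_j\in\{t_k,\dots,t_{k+d+1}\}$, so the divided difference is $0$; if $x\le t_k$ then $t_j\mapsto(t_j-x)_+^d=(t_j-x)^d$ is a polynomial of degree $d$ in $t_j$, whose $(d+1)$-st divided difference vanishes. Hence $\supp B_{k,d,\T}\subset[t_k,t_{k+d+1}]$. Non-negativity then follows by induction on $d$ from the recurrence, since on $[t_k,t_{k+d+1}]$ both coefficients $\tfrac{x-t_k}{t_{k+d}-t_k}$ and $\tfrac{t_{k+d+1}-x}{t_{k+d+1}-t_{k+1}}$ are nonnegative while outside this interval $B_{k,d,\T}$ already vanishes; the base case is an indicator function.

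\textbf{Property 3).} I would prove Marsden's identity $(y-x)^d=\sum_k\psi_{k,d}(y)\,B_{k,d,\T}(x)$ with $\psi_{k,d}(y):=\prod_{i=1}^{d}(y-t_{k+i})$ by induction on $d$: insert the recurrence into the right-hand side, reindex the second sum, and use the factorizations $\psi_{k,d}(y)=(y-t_{k+d})\psi_{k,d-1}(y)$ and $\psi_{k-1,d}(y)=(y-t_k)\psi_{k,d-1}(y)$ together with the affine partition $\tfrac{x-t_k}{t_{k+d}-t_k}(y-t_{k+d})+\tfrac{t_{k+d}-x}{t_{k+d}-t_k}(y-t_k)=y-x$. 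Differentiating $d$ times in $y$ and using $\psi_{k,d}^{(d)}\equiv d!$ yields $\sum_k B_{k,d,\T}(x)=1$; with the prescribed boundary-knot placement this holds on $[t_{d+1},t_{N+1}]\supseteq[0,1]$, i.e. on the whole working interval.

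\textbf{Property 4).} Linear independence is the step that needs the most care, and I would argue it locally. On every nonempty knot interval $I_i=(t_i,t_{i+1})$ exactly the $d+1$ B-splines $B_{i-d,d,\T},\dots,B_{i,d,\T}$ fail to vanish identically, each restricting there to a polynomial of degree $\le d$; the key point is that these $d+1$ restrictions are linearly independent on $I_i$ (equivalently, form a basis of $\mathbb{P}_d$ on $I_i$), which can be extracted from Marsden's identity since the polynomials $\psi_{i-d,d},\dots,\psi_{i,d}$ are linearly independent in $y$. Given a relation $\sum_k c_k B_{k,d,\T}\equiv0$ on $\mathbb{R}$, one sweeps the knot intervals from left to right and applies this local independence on each to force the $c_k$ to vanish block by block; the only delicate bookkeeping is at repeated knots, where empty intervals are simply skipped. (A dimension count against $\dim S_{d,\X,\M}=N$ gives an alternative route once spanning is established.) The main obstacle of the whole proof is precisely this linear-independence argument --- concretely, establishing the local basis property on each knot interval uniformly in the smoothness multiplicities $m_i$.
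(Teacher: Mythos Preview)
The paper does not give its own proof of this proposition: it is stated as a summary of classical B-spline properties with a reference to de~Boor's monograph~\cite{B01}, and no proof environment follows. Your argument is correct and is essentially the standard route taken in that reference---deriving the Cox--de~Boor recurrence from the divided-difference definition via Leibniz, then using induction for non-negativity and support, Marsden's identity for the partition of unity, and local linear independence on each knot interval for property~4). There is nothing to compare against here; you have supplied the proof that the paper chose to outsource.
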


In addition, the B-splines $B_{k,d,\T}$ can be computed by means of Cox-Boor recurrence formula. In fact, The k-th B-spline $B_{k,0,\T}$ of degree $0$ for the knot sequence $\T$ is the characteristic function of the half-open interval $[t_k, t_{k+1})$, i.e., the function given by 
\[
B_{k,0,\T} (x) := \begin{cases} & 1,\quad t_k \leq x <t_{k+1}, \\ & 0, \quad \mbox{otherwise} . \end{cases}
\]
And,
\[
B_{k,d,\T} (x) := \frac{x-t_k}{t_{k+d}-t_{k}} B_{k,d-1,\T} (x) +  \frac{t_{k+d+1}-x}{t_{k+d+1}-t_{k+1}} B_{k+1,d-1,\T} (x),\quad d>0,
\]
and by convention the value $0$ is assigned when the term $0/0$ appears. 

It holds,
\[
S_{d,\T,\M} = span \left\lbrace B_{k,d,\T} (x),\quad x\in [t_{d}, t_{N+1}]  \right\rbrace 
\]
Every spline $s \in S_{d,\T,\M}$ can be expressed as: 
\[
s (x) = \sum_{k=1}^N c_k B_{k,d,\T} (x),
\] where the real coefficients $c_k$ are called the control coefficients.

\subsection{A discrete spline quasi-interpolants of degrees $d=2, 3$}

In this subsection, we propose two discrete spline quasi-interpolation schemes, $Q^n_2$ and $Q^n_3$, reproducing quadratic and cubic polynomials, respectively. That is,
\begin{equation}\label{QI_Rep}
Q^n_d p = p,\quad \text{ for  all } p \in \mathbb{P}_d, \quad d=2, 3.
\end{equation}
Then, we will write the both schemes in the quasi-Lagrange form. For a sake of simplicity we consider a uniform partition, i.e., $\T =\left\lbrace x_0[d+1], x_1, \ldots , x_i, \ldots, x_{n-1}, x_n [d+1]\right\rbrace$, by $x_j[d+1]$, $j=0, n$, we mean that $x_j$, $j=0, n$, are repeated $d+1$ times, and then we omit $\T$ from the B-spline definition, i.e, $B_{i,d} = B_{i,d,\T}$.

Define,
\begin{equation}\label{QI}
Q^n_d \left( f\right) = \sum_{i=0}^{n+d-1} \lambda_{i,d} \left( f\right) B_{i,d},
\end{equation}
where the functionals $\lambda_{i,d} \left( f\right)$ are selected to ensure that (\ref{QI_Rep}) holds. These functionals vary depending on the nature of the information available about the function $f$ to be approximated. Typically, they take the form of point, derivative, or integral linear forms. In this work, the first case is considered, wherein $\lambda_{i,d} \left( f\right)$ represents a finite linear combination of discrete values of $f$. 

\subsubsection{Quadratic spline quasi-interpolation schemes}
Define,
\[
{\bf \sigma} := \left\lbrace \sigma_0 = x_0, \, \sigma_i = \frac{1}{2} \left( x_i+x_{i-1}\right),\quad 1\leq i \leq n,\, \sigma_{n+1} = x_{n+1} \right\rbrace .
\]
With ${\bf \sigma }$ we can associate the so-called Schoenberg operator \cite{MS70}, i.e, $\lambda_{i,2} \left( f\right) = f(\sigma_i)$. However, this operator only reproduce linear polynomials, and then is only of convergence order equal to $2$. For this, we define the functionals $\lambda_{i,2} \left( f\right)$ as follows \cite{S03},
\begin{align}
\lambda_{0,2} \left( f\right) &= f(\sigma_0),\nonumber\\
\lambda_{i,2} \left( f\right) &= -\frac{1}{8} f(\sigma_{i-1})+ \frac{5}{4} f(\sigma_{i}) - \frac{1}{8} f(\sigma_{i+1}), \quad 1 \leq i \leq n\label{lamda2}\\
\lambda_{n+1,2} \left( f\right) &= f(\sigma_{n+1}),\nonumber .
\end{align}
Let $\tau := x_{i+1}-x_i$ stands for the space step-length. The result below holds.
\begin{theorem}
Let $Q_{2}^n$ be the spline scheme defined by (\ref{QI}) and (\ref{lamda2}), the for any $f \in C^{3}$, we have
\[
\Vert f- Q_{2}^n \left( f\right) \Vert_{\infty} \leq C  \tau^3 \Vert f^{(3)} \Vert_{\infty} ,
\]
where $C$ represents a positive constant independent of $\tau$.
\end{theorem}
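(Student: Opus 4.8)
The plan is to run the classical localization-and-polynomial-reproduction argument for quasi-interpolants. Fix $x\in[0,1]$ and let $[x_j,x_{j+1}]$ be the mesh cell containing it. By the local support property of the B-splines, only at most $d+1=3$ of the $B_{i,2}$ are nonzero at $x$; denoting their indices by $I(x)$ one has $Q^n_2 f(x)=\sum_{i\in I(x)}\lambda_{i,2}(f)\,B_{i,2}(x)$. By \eqref{lamda2}, each functional $\lambda_{i,2}$ occurring here is a fixed linear combination of values of $f$ at the nodes $\sigma_{i-1},\sigma_i,\sigma_{i+1}$ (or the single value $f(\sigma_0)$, resp. $f(\sigma_{n+1})$, at the two endpoints), and because the mesh is uniform all these nodes lie in a subinterval $J(x)\subseteq[0,1]$ whose length is bounded by a fixed multiple of $\tau$. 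Moreover the coefficients $-\tfrac18,\tfrac54,-\tfrac18$ give the $L^\infty$-stability $|\lambda_{i,2}(g)|\le\tfrac32\,\|g\|_{L^\infty(J(x))}$ for every $g\in C^0$, and $|\lambda_{i,2}(g)|\le\|g\|_{L^\infty(J(x))}$ at the two boundary indices; put $\Lambda:=\tfrac32$.

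Next I would introduce $p\in\mathbb{P}_2$, the degree-$2$ Taylor polynomial of $f$ about $x$. Since $Q^n_2$ reproduces $\mathbb{P}_2$ by \eqref{QI_Rep},
\[
f(x)-Q^n_2 f(x)=\bigl(f(x)-p(x)\bigr)-Q^n_2(f-p)(x)=-\,Q^n_2(f-p)(x),
\]
using $p(x)=f(x)$. The Lagrange form of the Taylor remainder gives $|f(y)-p(y)|\le\tfrac16\|f^{(3)}\|_\infty\,|y-x|^3$ for all $y\in J(x)$, hence $\|f-p\|_{L^\infty(J(x))}\le C_1\,\tau^3\,\|f^{(3)}\|_\infty$ with $C_1$ depending only on the fixed ratio $\operatorname{diam}J(x)/\tau$. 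Combining this with the stability bound, and using $0\le B_{i,2}(x)\le1$ together with $\sum_i B_{i,2}(x)=1$,
\[
|f(x)-Q^n_2 f(x)|=\Bigl|\sum_{i\in I(x)}\lambda_{i,2}(f-p)\,B_{i,2}(x)\Bigr|\le\Lambda\,\|f-p\|_{L^\infty(J(x))}\le\Lambda\,C_1\,\tau^3\,\|f^{(3)}\|_\infty.
\]
Since the right-hand side does not depend on $x$, taking the supremum over $x\in[0,1]$ yields the assertion with $C=\Lambda C_1$.

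The only point that needs genuine care is the bookkeeping at the endpoints $x_0=0$ and $x_n=1$: there the knots are $(d+1)$-fold, the index set $I(x)$ must be written out explicitly, and the special functionals $\lambda_{0,2},\lambda_{n+1,2}$ are pure point evaluations, so one has to verify that the local sampling interval $J(x)$ is still of length $O(\tau)$ and that these functionals do not degrade the rate. On the uniform mesh assumed here this is routine, since every geometric quantity entering $C_1$ is an explicit multiple of $\tau$; the substance of the proof is simply the interplay between the $\mathbb{P}_2$-reproduction \eqref{QI_Rep} and the uniform $L^\infty$-stability of the coefficient functionals \eqref{lamda2}.
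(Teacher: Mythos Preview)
Your argument is correct and is precisely the standard localization/polynomial-reproduction argument for spline quasi-interpolants. Note, however, that the paper does not actually supply a proof of this theorem: the scheme and the error bound are taken from Sablonni\`ere \cite{S03}, and the result is simply stated. So there is no ``paper's own proof'' to compare with; what you have written is the expected justification behind the cited estimate, relying on \eqref{QI_Rep} for the reproduction step and on the explicit coefficients in \eqref{lamda2} for the uniform $L^\infty$-stability of the functionals. The only cosmetic point is that near the endpoints the evaluation nodes $\sigma_0=x_0$, $\sigma_1=(x_0+x_1)/2$, \dots are not equispaced, so the constant in the Taylor-remainder bound for $\|f-p\|_{L^\infty(J(x))}$ is slightly different there; but since $J(x)$ still has diameter $O(\tau)$ this does not affect the conclusion, as you note.
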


The spline scheme $Q_{2}^n$  can be writing in quasi-Lagrange form:
\[
Q_{2}^n \left( f \right) = \sum_{i=0}^{n+1} f\left( \sigma_i\right) \widetilde{B}_{i,2}.
\]
where the basis functions $\widetilde{B}_{i,2}$ are defined as follows. For $3 \leq i \leq n-2$ we have
\[
\widetilde{B}_{i,2}  := - \frac{1}{8} B_{i-1,2} + \frac{5}{4} B_{i,2} - \frac{1}{8} B_{i+1,2},
\]
and for $i=0, 1, 2$ we define,
\begin{align*}
\widetilde{B}_{0,2} & := B_{0,2} - \frac{1}{3} B_{1,2},\\
\widetilde{B}_{1,2} & := \frac{3}{2} B_{1,2} - \frac{1}{8} B_{2,2},\\
\widetilde{B}_{2,2} & := - \frac{1}{6} B_{1,2} + \frac{5}{4} B_{2,2} - \frac{1}{8} B_{3,2}.
\end{align*}
Similarly, we define,
\begin{align*}
\widetilde{B}_{n-1,2} & := - \frac{1}{8} B_{n-2,2} + \frac{5}{4} B_{n-1,2} - \frac{1}{6} B_{n,2},\\
\widetilde{B}_{n,2} & := -\frac{1}{8} B_{n-1,2} +\frac{3}{2} B_{n,2},\\
\widetilde{B}_{n+1,2} & := -\frac{1}{8} B_{n,2} + B_{n+1,2}.
\end{align*}

Next, we will provide a cubic spline quasi-interpolation scheme.

\subsubsection{Cubic spline quasi-interpolation schemes}
Following the same strategy as for quadratic case, we define \cite{BSTT15},
\begin{footnotesize}
\begin{align*}
\widetilde{B}_{0,3} & := B_{0,3} +\frac{7}{18} B_{1,3}-\frac{1}{6} B_{2,3},\\
\widetilde{B}_{1,3} & := B_{1,3} +\frac{4}{3} B_{2,3}-\frac{1}{6} B_{3,3},\\
\widetilde{B}_{2,3} & :=-\frac{1}{2} B_{1,3} -\frac{1}{6} B_{2,3}+\frac{4}{3} B_{3,3}-\frac{1}{6} B_{4,3},\\
\widetilde{B}_{3,3} & :=\frac{1}{9} B_{2,3} -\frac{1}{6} B_{3,3}+\frac{4}{3} B_{4,3}-\frac{1}{6} B_{5,3},\\
\widetilde{B}_{i,3} & :=-\frac{1}{6} B_{i-1,3} +\frac{4}{3} B_{i,3}-\frac{1}{6} B_{i+1,3}, 4\leq i \leq n-2\\
\widetilde{B}_{n-1,3} & :=-\frac{1}{6} B_{n-3,3} +\frac{4}{4} B_{n-2,3}-\frac{1}{6} B_{n-1,3}+\frac{1}{9} B_{n,3},\\
\widetilde{B}_{n,3} & :=-\frac{1}{6} B_{n-2,3} +\frac{4}{3} B_{n-1,3}-\frac{1}{6} B_{n,3}-\frac{1}{2} B_{n+1,3},\\
\widetilde{B}_{n+1,3} & := -\frac{1}{6} B_{n-1,3} +\frac{4}{3} B_{n,3}+ B_{n+1,3},\\
\widetilde{B}_{n+2,3} & := -\frac{1}{6}B_{n,3} +\frac{7}{18} B_{n+1,3} + B_{n+2,3}.
\end{align*}
\end{footnotesize}
The cubic quasi-interpolation schemes $Q_3^n $ is defined in quasi-Lagrange form as follows,
\begin{equation}
Q_3^n \left( f \right) = \sum_{i=0}^{n+2} f(x_i) \widetilde{B}_{i,3}.
\end{equation}

The following result holds.
\begin{theorem}
For any $f \in C^4$, we have
\[
\Vert f - Q_3^n \left( f\right) \Vert_{\infty} \leq C \tau^4 \Vert f^{(4)}\Vert_{\infty}.
\]
where $C$ represents a positive constant independent of $\tau$.
\end{theorem}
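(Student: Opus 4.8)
The final statement to prove is the quasi-interpolation error bound for the cubic scheme $Q_3^n$.

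\medskip

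The plan is to reduce the global estimate to a purely local one on each knot interval, exploiting the compact support of the B-splines and the polynomial reproduction property \eqref{QI_Rep}. First I would fix a generic interval $I_j = [x_j, x_{j+1}]$ and observe that, by the local support property (item 2 of the Proposition on B-splines), the restriction $Q_3^n(f)|_{I_j}$ depends only on finitely many coefficients $f(x_i)$ with $i$ in a bounded index window around $j$ (the window size being controlled by $d=3$, hence a fixed number of neighbouring nodes). Consequently $Q_3^n$ acts locally as a bounded linear operator from $C(I_j^\sharp)$ — where $I_j^\sharp$ is the slightly enlarged stencil interval — into $\mathbb{P}_3$-reproducing splines on $I_j$.

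\medskip

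Next I would invoke the standard Bramble--Hilbert / local polynomial approximation argument. Since $Q_3^n$ reproduces $\mathbb{P}_3$ by \eqref{QI_Rep}, for any $p \in \mathbb{P}_3$ we have $f - Q_3^n f = (f-p) - Q_3^n(f-p)$ on $I_j$, and therefore
\[
\|f - Q_3^n f\|_{\infty, I_j} \le \left(1 + \|Q_3^n\|_{\infty, I_j^\sharp \to I_j}\right)\, \|f - p\|_{\infty, I_j^\sharp}.
\]
The operator norm $\|Q_3^n\|$ is bounded uniformly in $n$ because the functionals $\lambda_{i,3}$ are fixed finite linear combinations of point values with coefficients independent of $\tau$ (read off directly from the displayed formulas for $\widetilde B_{i,3}$), and because $\sum_k B_{k,3} = 1$ with $B_{k,3}\ge 0$ (items 1 and 3). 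Then choosing $p$ to be the degree-$3$ Taylor polynomial of $f$ about a point of $I_j^\sharp$ and using $f \in C^4$ gives $\|f-p\|_{\infty, I_j^\sharp} \le C\, |I_j^\sharp|^4 \|f^{(4)}\|_\infty \le C' \tau^4 \|f^{(4)}\|_\infty$, since $|I_j^\sharp|$ is a fixed multiple of $\tau$ on the uniform partition. Taking the maximum over all intervals $j$ yields the claimed global bound with a constant independent of $\tau$.

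\medskip

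The only genuine subtlety — and the step I would treat most carefully — is the behaviour near the two endpoints $x_0$ and $x_n$, where the stencils are modified (the special formulas for $\widetilde B_{0,3},\widetilde B_{1,3},\widetilde B_{2,3},\widetilde B_{3,3}$ and their mirror images near $x_n$). There one must check separately that polynomial reproduction $Q_3^n p = p$ for $p\in\mathbb{P}_3$ still holds on the boundary intervals — this is a finite linear-algebra verification using the given coefficients — and that the corresponding local operator norms are still $O(1)$ in $\tau$. Once those boundary stencils are confirmed to reproduce cubics with uniformly bounded norm, the Bramble--Hilbert estimate applies verbatim there as well, and the proof is complete. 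The interior intervals $4 \le i \le n-2$ are entirely routine since the functional $-\frac16 f(x_{i-1}) + \frac43 f(x_i) - \frac16 f(x_{i+1})$ is a standard one known to reproduce $\mathbb{P}_3$.
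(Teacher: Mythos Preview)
The paper does not actually prove this theorem: it merely states the error bound, attributing the construction of the cubic scheme to \cite{BSTT15} (and the analogous quadratic result to \cite{S03}). There is therefore no proof in the paper to compare yours against.

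That said, your argument is the standard and correct one for this class of results: local support reduces the problem to a single stencil interval, cubic reproduction \eqref{QI_Rep} lets you subtract a Taylor cubic, the uniform bound on the local operator norm (which follows from the partition of unity and the fixed numerical coefficients in the $\widetilde B_{i,3}$) gives a Lebesgue-type constant independent of $\tau$, and Taylor's remainder supplies the $\tau^4\|f^{(4)}\|_\infty$ factor. Your identification of the boundary stencils as the only place requiring a separate check is also correct; in the cited reference this is handled exactly as you describe, by a direct verification that the modified boundary functionals still reproduce $\mathbb{P}_3$ with bounded coefficients. One small wording issue: the three-point functional $-\tfrac16 f(x_{i-1})+\tfrac43 f(x_i)-\tfrac16 f(x_{i+1})$ does not by itself ``reproduce $\mathbb{P}_3$''; what is true is that these functionals, when paired with the cubic B-splines in \eqref{QI}, yield an operator that reproduces cubics. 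The distinction is minor but worth keeping straight.
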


\subsection{Numerical solving of the state equation}
Now, a collocation method based on the spline quasi-interpolation schemes described above will be used to numerically solve the differential equation (\ref{problem}) on $[0, 1]\times [0, T]$.

To obtain an approximate solution of the time-dependent problem (\ref{problem}), we approximate the solution $u(x, t)$ and the control function $h(x,t)$ in terms of B-spline as follows:
\begin{equation}
u(x, t) = \sum_{i=0}^{n+d-1} u_i (t) \widetilde{B}_{i,d},\quad h(x, t) = \sum_{i=0}^{n+d-1} h_i (t) \widetilde{B}_{i,d}, \quad d=2, 3,
\end{equation}
where $u_i (t) = u(x_i, t)$, $h_i (t) = h(x_i, t)$ are unknowns functions to be determined. The boundary conditions in (\ref{problem}) yields
\[
u_0(t) =0,\quad u_{n+d-1} (t) = 0,\quad t\in [0, T].
\]
While at the interior points $x_i$, $i=1,\ldots, n-1$, the condition in \eqref{problem} are of the form,
\begin{footnotesize}
\begin{align*}
\frac{\partial u}{\partial t} (x_i) &= \sum_{j=0}^{n+d-1} u_j (t) x_i^\alpha \widetilde{B}^{\prime\prime}_{j,d} (x_i) + \sum_{j=0}^{n+d-1} u_j (t) \alpha x_i^{\alpha -1}  \widetilde{B}^{\prime}_{i,d} (x_i) + \sum_{j=0}^{n+d-1} u_j (t) \frac{\mu}{x_i^{2-\alpha}} \widetilde{B}_{i,d} (x_i) - \sum_{i=\ell_0}^{\ell_1} h_i (t) \widetilde{B}_{i,d} (x_i)\\
 &= \sum_{j=1}^{n+d-2} u_j (t) \left( x_i^\alpha \widetilde{B}^{\prime\prime}_{j,d} (x_i) +\alpha x_i^{\alpha -1}  \widetilde{B}^{\prime}_{i,d} (x_i) +  \frac{\mu}{x_i^{2-\alpha}} \widetilde{B}_{i,d} (x_i) \right) - \sum_{j=\ell_0}^{\ell_1} h_j (t) \widetilde{B}_{j,d} (x_i)\\
& + \sum_{j \in \{0, n+d-1\}} u_j (t) \left( x_i^\alpha \widetilde{B}^{\prime\prime}_{j,d} (x_i) +\alpha x_i^{\alpha -1}  \widetilde{B}^{\prime}_{i,d} (x_i) +  \frac{\mu}{x_i^{2-\alpha}} \widetilde{B}_{i,d} (x_i) \right).
\end{align*}
\end{footnotesize}
Using the boundary conditions, one can get
\begin{equation}\label{collocation1}
\frac{\partial u}{\partial t} (x_i) = \sum_{j=1}^{n+d-2} u_j (t) \left( x_i^\alpha \widetilde{B}^{\prime\prime}_{j,d} (x_i) +\alpha x_i^{\alpha -1}  \widetilde{B}^{\prime}_{i,d} (x_i) +  \frac{\mu}{x_i^{2-\alpha}} \widetilde{B}_{i,d} (x_i) \right) - \sum_{j=\ell_0}^{\ell_1} h_j (t) \widetilde{B}_{j,d} (x_i).
\end{equation}
The indices $\ell_0 \leq i \leq \ell_1$ stand for the indices of the B-splines $\widetilde{B}_{i,d}$ that have non-zero value at $\overline{b}$.

We should determine the $n+d-2$ unknowns $u_i (t)$, $i=1, \ldots , n+d-2$. To this end, (\ref{collocation1}) can reformulated in matrix form as
\begin{equation*}
{\bf U}^\prime (t) = {\bf A } {\bf U}(t) - {\bf B} {\bf  H}(t),
\end{equation*}
where ${\bf A }_{i, j} = \left( x_i^\alpha \widetilde{B}^{\prime\prime}_{j,d} (x_i) +\alpha x_i^{\alpha -1}  \widetilde{B}^{\prime}_{i,d} (x_i) +  \frac{\mu}{x_i^{2-\alpha}} \widetilde{B}_{i,d} (x_i) \right)$, ${\bf U} = \left[u_1 (t), \ldots , u_{n+d-2} \right] $,   ${\bf B}_{i,j} = \widetilde{B}_{j,d} (x_i)$, and ${\bf H} = \left[0, \ldots , h_{\ell_0} (t), \ldots, h_{\ell_1} (t), \ldots, 0 \right] \in \mathbb{R}^{n+d-2} $.

We will apply the control at $\overline{b} = 1/2$, the midpoint of the interval $[0, 1]$. We consider the following sets of knots:

\begin{align*}
&{\bf x}_1 := \frac{1}{10}\left\lbrace 1, 2, 3, 4, 5, 6, 7, 8, 9, 10 \right\rbrace, \\
&{\bf x}_2 := \frac{1}{10}\left\lbrace 1, 2, 3, 4, 5, 5, 6, 7, 8, 9, 10 \right\rbrace, \\
&{\bf x}_3 := \frac{1}{10}\left\lbrace 1, 2, 3, 4, 5, 5, 5, 6, 7, 8, 9, 10 \right\rbrace.
\end{align*}

In these sets, the boundary knots $0$ and $1$ are repeated degree $+ 1$, where "degree" refers to the spline degree. We will examine both quadratic and cubic splines. Figure \ref{Spl_Knots} shows, the quadratic and cubic B-splines defined on the sets ${\bf x}_i$, $i=1, 2, 3$, respectively.

\begin{figure}[!h]
\centering
\includegraphics[scale=0.5]{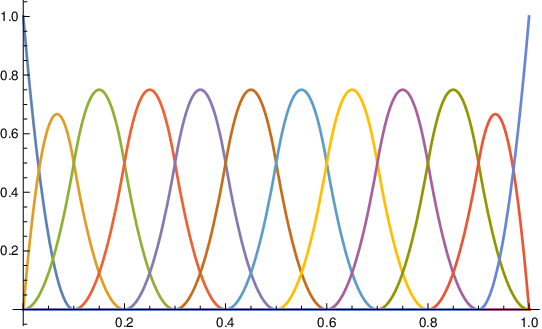} \includegraphics[scale=0.5]{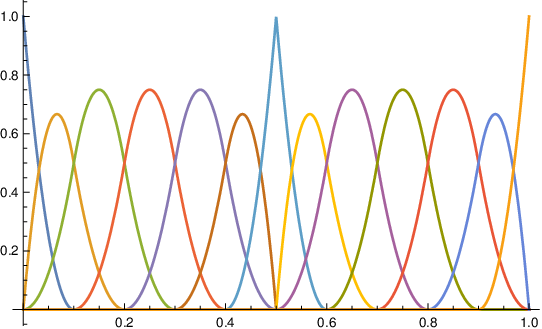} \includegraphics[scale=0.5]{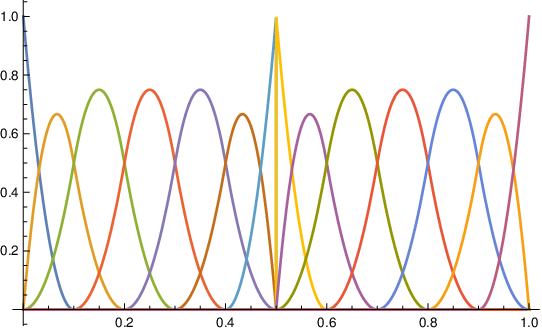}
\includegraphics[scale=0.5]{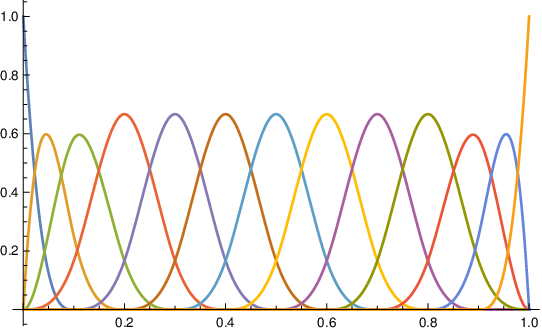} \includegraphics[scale=0.5]{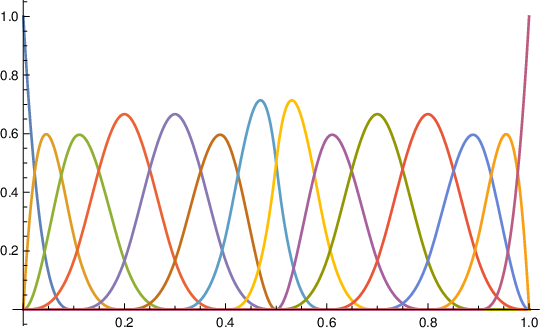} \includegraphics[scale=0.5]{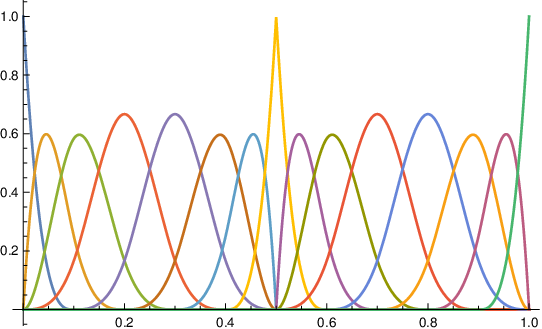}
\caption{Plots of the quadratic (top) and the cubic (bottom) B-splines associated with the sets ${\bf x}_1$ (left), ${\bf x}_2$ (center) and ${\bf x}_3$ (right).}\label{Spl_Knots}
\end{figure}

The finite element matrix ${\bf A}$ is sparse and has good shapes, thanks to the B-spline basis functions having minimum possible supports. Figure \ref{fig2} illustrates schematic representations of the matrix ${\bf A}$ for quadratic B-splines associated with the knot sets ${\bf x}_i$, $i=1, 2, 3$, and for $(\alpha,\mu)=\left( \frac{1}{2}, \frac{1}{16}\right)$.

\begin{figure}[H]
\centering
\includegraphics[scale=0.5]{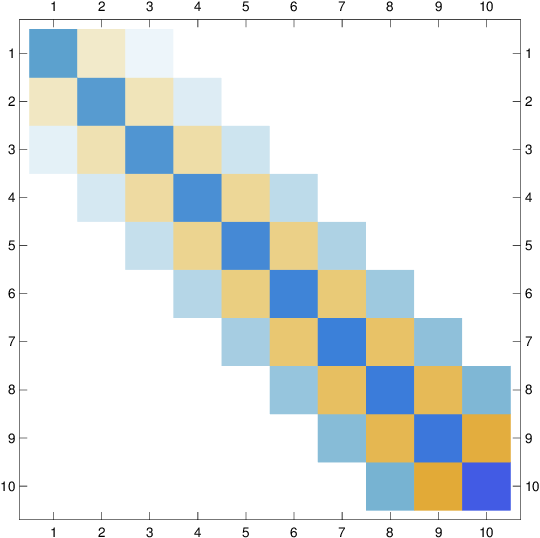} \includegraphics[scale=0.5]{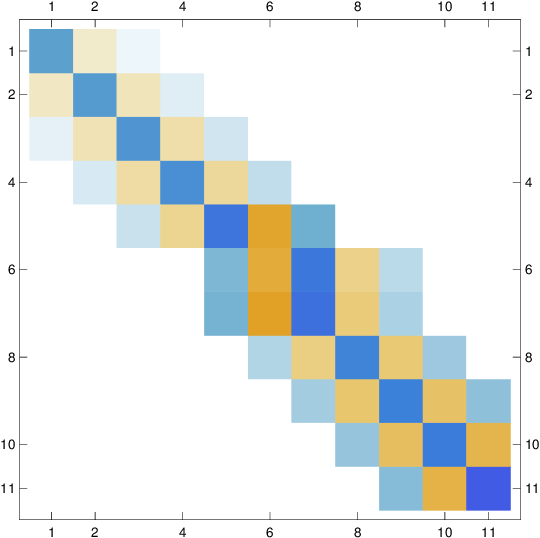} \includegraphics[scale=0.5]{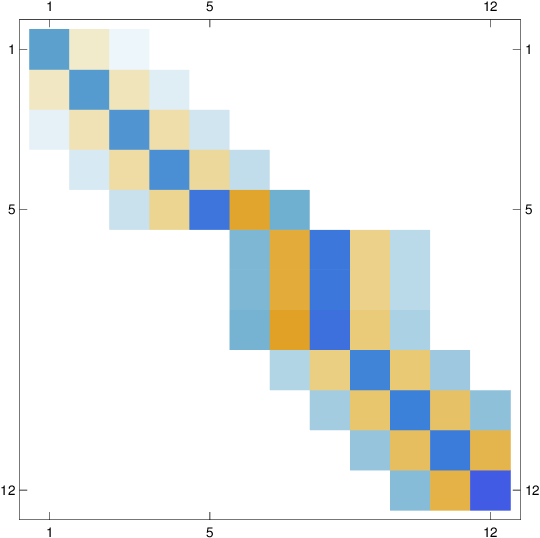}
\caption{Plots of the matrix ${\bf A}$ generated by using quadratic B-splines associated with the knot sets ${\bf x}_i$, $i=1, 2, 3$ (from left to right) and for $\left( \alpha, \mu \right)=\left( \frac{1}{2}, \frac{1}{16}\right)$.}\label{fig2}
\end{figure}

In what follows, we will provide some numerical tests based on the spline quasi-interpolation schemes proposed here and the collocation method described above. In the following examples, we assume that $T=1$, and $\Delta t = \frac{1}{50}$ denotes the time step used for the time discretization.

In the following, we present two examples. In the first example, we utilize the quadratic quasi-interpolating scheme $Q_2^n$, while in the second one, we employ the cubic scheme $Q_3^n$.

\begin{example}
In this example, we consider the initial state $u_0(x) = 3 \sin(2 \pi x)$. We provide numerical tests for $\left( \alpha , \mu\right) = \left( \frac{1}{2}, \frac{1}{16}\right)$.

Figure \ref{fg1} shows the results obtained for the state $u(x,t)$ (left) at $t=1$ and the control $h(x,t)$ (right) at different time instants. 

\begin{figure}[h!]
\centering
\includegraphics[scale=0.6]{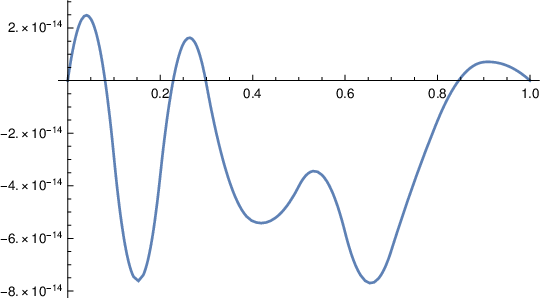}\quad \includegraphics[scale=0.6]{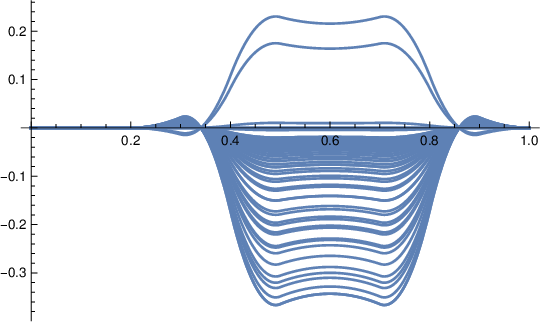}\\
\includegraphics[scale=0.6]{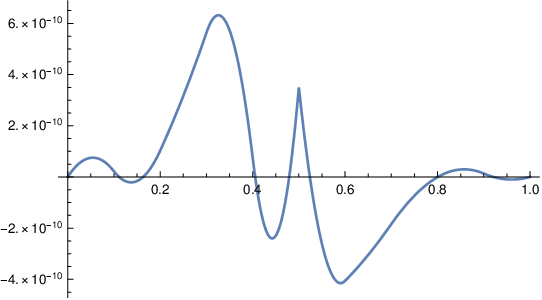}\quad \includegraphics[scale=0.6]{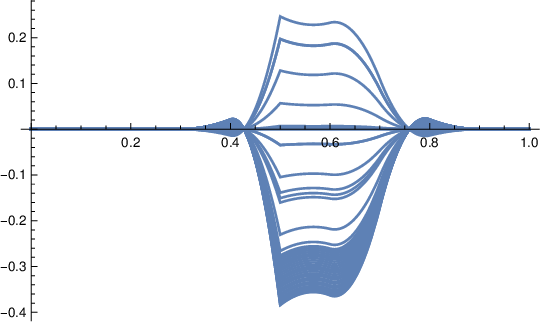}\\
\includegraphics[scale=0.6]{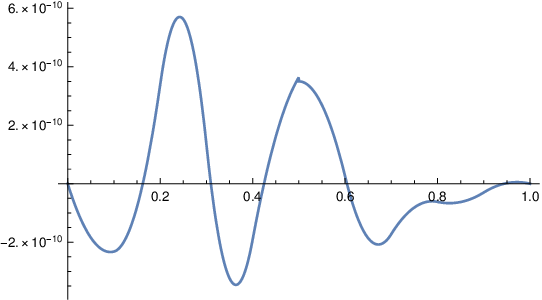}\quad \includegraphics[scale=0.6]{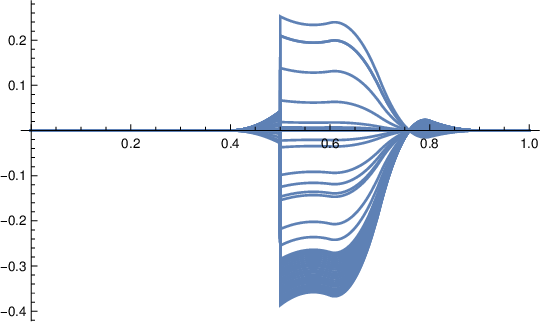}
\caption{Example 1, the state at $t =1$ (left) and the control (right) at different time instants for quadratic splines associated with the knot sets ${\bf x}_i$, $i=1, 2, 3$ from top to bottom, respectively.}\label{fg1}
\end{figure}

\end{example}

\begin{example}
In this example, we take the initial state $u_0 (x) = \sin (3 \pi  x) \cos \left(\frac{1}{2} \pi  (1-x)\right)$. We provide numerical tests for $\left( \alpha , \mu\right) = \left( \frac{1}{8}, \frac{49}{256}\right)$.

Figure \ref{fg2} illustrates the results obtained for the state $u(x,t)$ (left) at $t=1$ and the control $h(x,t)$ (right) at different time instants.

\begin{figure}[h!]
\centering
\includegraphics[scale=0.5]{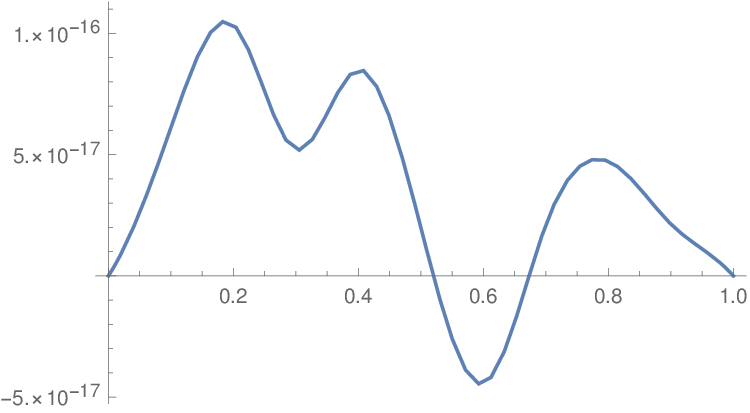} \includegraphics[scale=0.5]{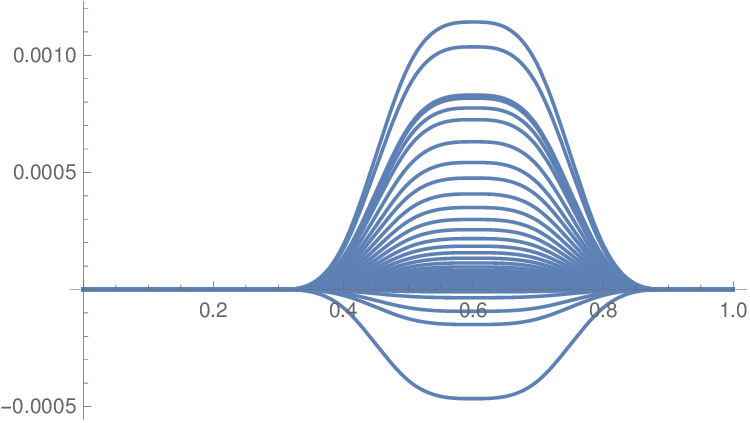}\\
\includegraphics[scale=0.5]{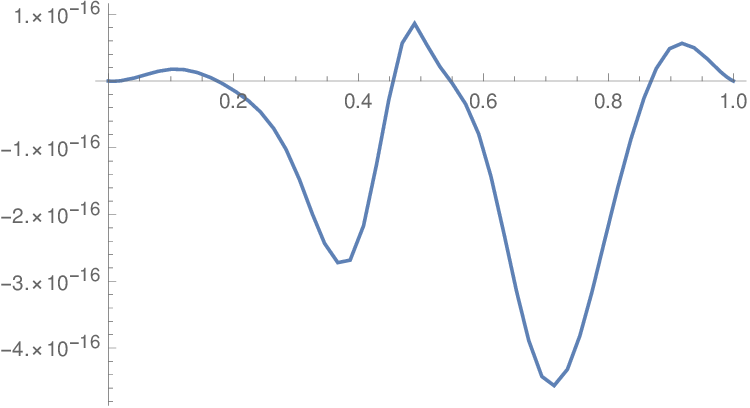} \includegraphics[scale=0.5]{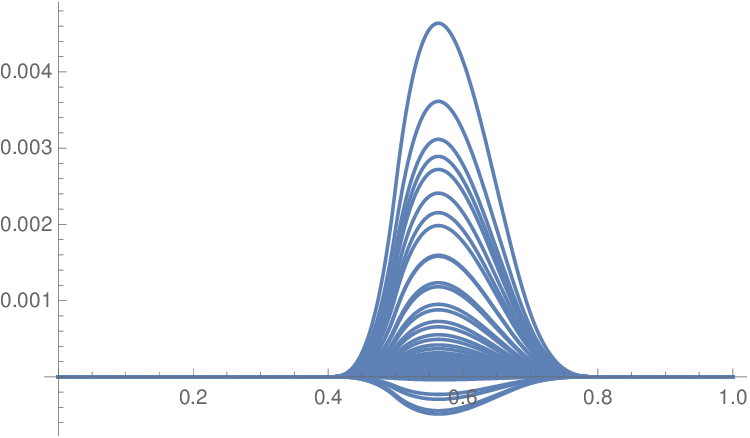}\\
\includegraphics[scale=0.5]{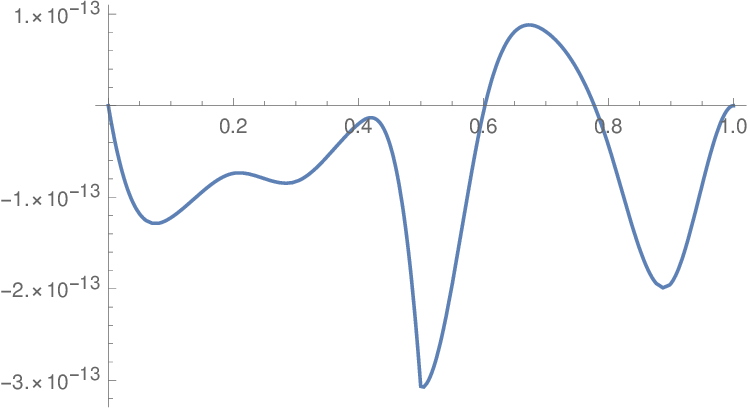}  \includegraphics[scale=0.5]{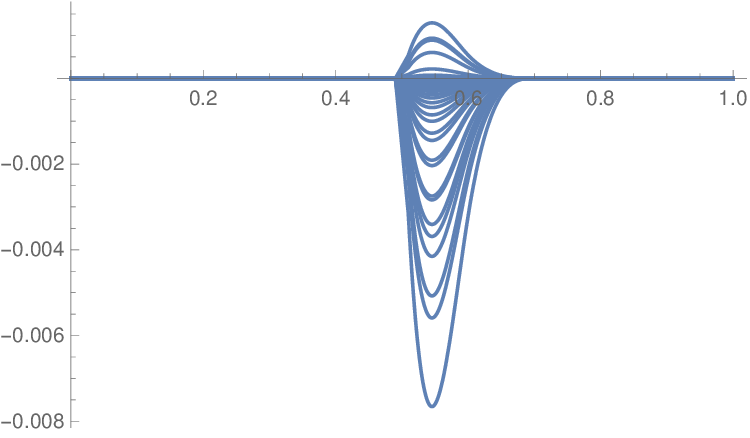}
\caption{Example 2, the state at $t =1$ (left) and the control (right) at different instants for cubic splines associated with the knot sets ${\bf x}_i$, $i=1, 2, 3$ from top to bottom, respectively.}\label{fg2}
\end{figure}

\end{example}

It is evident from the two examples that the support of the B-splines used affects the behaviour of the control and also defines the support of the control. Indeed, we observe that when the point $\bar{b}$ lies within the intersection of more splines, the simulation becomes more efficient. Meaning that for a fixed degree, the B-splines with maximal smoothness are highly suitable for numerical solution of point-wise control problems.

\appendix
\section{Proof of Lemma \ref{gapresult}}\label{appendix:b}
In this appendix we will provide the asymptotic behavior of the sequence of eigenvalues $\left(\lambda_{\alpha, \mu, k}\right)_{k \geq 1}$. To be precise, we will give the sketch of the proof for Lemma \ref{gapresult}.
\begin{proof}
\begin{enumerate}
\item Depending on the case of $\nu(\alpha,\mu)$, the first point of Lemma \ref{gapresult} can be easily deduced from \eqref{boundcase1} and \eqref{boundcase2} together with the description of the eigenvalues in \eqref{eigenvalues}.
\item For the second point, it can be obtained as in \cite{AHSS2022}. Thanks to \cite[Proposition 7.8]{KL2005} and \eqref{boundcase1} or \eqref{boundcase2} (depending on the case of $\nu(\alpha,\mu)$). One has \eqref{gap} holds true with
\begin{align*}
\rho= \begin{cases} \frac{7}{64} \pi^2 (2-\alpha)^2, &\text{ if } 0 \leq \nu(\alpha,\mu) \leq 1/2\\[0.1cm]
(\frac{2-\alpha}{2})^2\pi^2, &\text{ if } \nu(\alpha,\mu) \geq 1/2. \end{cases}
\end{align*}
\end{enumerate}			
\end{proof}

\section{Additional result on the minimal time $T^{(\alpha, \mu)}_0(\overline{b})$}\label{appendix:a}
This appendix will be dedicated to prove that the minimal time $T^{(\alpha, \mu)}_0(\overline{b})$ associated to the null controllability of the equation \eqref{problem} is well-defined and $T^{(\alpha, \mu)}_0(\overline{b}) \in [0,+\infty]$. One has
\begin{theorem}
Let $\mu\leq \mu(\alpha)$ and $y_0 \in L^2(0,1)$. Assume that \eqref{cond1} holds and let $T^{(\alpha, \mu)}_0(\overline{b})$ be the quantity given in \eqref{mintime}. One has $$T^{(\alpha, \mu)}_0(\overline{b}) \in [0,+\infty].$$
\end{theorem}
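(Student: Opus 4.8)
The plan is to show that the sequence
\[
a_k \ :=\ -\,\frac{\log\bigl(|\Phi_{\alpha,\mu,k}(\overline b)|\bigr)}{\lambda_{\alpha,\mu,k}}, \qquad k\ge 1,
\]
is a well-defined sequence of real numbers whose $\limsup$ lies in $[0,+\infty]$. Since $\overline b$ satisfies \eqref{cond1}, one has $\Phi_{\alpha,\mu,k}(\overline b)\neq 0$ for every $k\ge 1$ (this is precisely the condition isolated in Section~\ref{PNC} as being necessary to solve \eqref{momentpb0}), so $\log(|\Phi_{\alpha,\mu,k}(\overline b)|)\in\R$; together with $0<\lambda_{\alpha,\mu,1}\le\lambda_{\alpha,\mu,k}<+\infty$, this makes each $a_k$ a finite real number, and hence $\limsup_{k\to\infty}a_k$ exists in $[-\infty,+\infty]$. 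Because the value $+\infty$ is permitted in the statement, it remains only to check that this $\limsup$ is $\ge 0$, for which it suffices to prove $\liminf_{k\to\infty}a_k\ge 0$.

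The key step is an upper bound for $|\Phi_{\alpha,\mu,k}(\overline b)|$. Writing $\nu=\nu(\alpha,\mu)\ge 0$ and using the explicit form \eqref{eigenfunctions},
\[
|\Phi_{\alpha,\mu,k}(\overline b)| \ =\ \frac{\sqrt{2-\alpha}}{|J_\nu'(j_{\nu,k})|}\;\overline b^{\frac{1-\alpha}{2}}\;\bigl|J_\nu\!\bigl(j_{\nu,k}\,\overline b^{\frac{2-\alpha}{2}}\bigr)\bigr|.
\]
Here $J_\nu$ is bounded on $[0,+\infty)$, say $\sup_{t\ge 0}|J_\nu(t)|=M_\nu<+\infty$, while the large-argument asymptotics of Bessel functions (see \cite{Watson}) give $|J_\nu'(j_{\nu,k})|\sim\sqrt{2/(\pi j_{\nu,k})}$ as $k\to\infty$; since all the zeros $j_{\nu,k}$ are simple, this yields a bound $|J_\nu'(j_{\nu,k})|\ge c_\nu\,j_{\nu,k}^{-1/2}$ valid for every $k\ge 1$. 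Hence $|\Phi_{\alpha,\mu,k}(\overline b)|\le C_0\,j_{\nu,k}^{1/2}$, and replacing $j_{\nu,k}$ by $\tfrac{2}{2-\alpha}\sqrt{\lambda_{\alpha,\mu,k}}$ via \eqref{eigenvalues} gives
\[
|\Phi_{\alpha,\mu,k}(\overline b)| \ \le\ C\,\lambda_{\alpha,\mu,k}^{1/4}, \qquad k\ge 1,
\]
for a constant $C=C(\alpha,\mu,\overline b)>0$.

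It follows that, for every $k\ge 1$,
\[
a_k \ =\ -\,\frac{\log|\Phi_{\alpha,\mu,k}(\overline b)|}{\lambda_{\alpha,\mu,k}} \ \ge\ \frac{-\log C-\tfrac14\log\lambda_{\alpha,\mu,k}}{\lambda_{\alpha,\mu,k}},
\]
and since $\lambda_{\alpha,\mu,k}\to+\infty$ (immediate from \eqref{eigenvalues} and the lower bounds \eqref{boundcase1}--\eqref{boundcase2} on $j_{\nu,k}$), the right-hand side tends to $0$. Therefore $\liminf_{k\to\infty}a_k\ge 0$, so
\[
T^{(\alpha,\mu)}_0(\overline b)=\limsup_{k\to\infty}a_k\ \ge\ \liminf_{k\to\infty}a_k\ \ge\ 0,
\]
and, this $\limsup$ being a priori possibly $+\infty$, we conclude $T^{(\alpha,\mu)}_0(\overline b)\in[0,+\infty]$. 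The only non-routine ingredient is the uniform lower bound $|J_\nu'(j_{\nu,k})|\ge c_\nu\, j_{\nu,k}^{-1/2}$; it follows from the classical asymptotic expansions of $J_\nu$ and $J_\nu'$ for large argument, together with the simplicity and the localisation \eqref{boundcase1}--\eqref{boundcase2} of the zeros $j_{\nu,k}$, which for the single fixed order $\nu=\nu(\alpha,\mu)$ at hand is straightforward; everything else is a direct consequence of Proposition~\ref{spectrum}.
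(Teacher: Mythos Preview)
Your proof is correct and follows the same overall strategy as the paper: show that $\Phi_{\alpha,\mu,k}(\overline b)\neq 0$ under \eqref{cond1}, obtain a polynomial upper bound $|\Phi_{\alpha,\mu,k}(\overline b)|\le C\,\lambda_{\alpha,\mu,k}^\gamma$, and conclude that $a_k\ge -(\log C+\gamma\log\lambda_{\alpha,\mu,k})/\lambda_{\alpha,\mu,k}\to 0$, hence $\limsup a_k\ge 0$.

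The only notable difference is in how the polynomial bound is established. The paper argues abstractly via the duality pairing
\[
|\Phi_{\alpha,\mu,k}(\overline b)|=|\langle\delta_{\overline b},\Phi_{\alpha,\mu,k}\rangle|\le \|\delta_{\overline b}\|_{H^{-1,\mu}_{\alpha,0}}\,\|\Phi_{\alpha,\mu,k}\|_{H^{1,\mu}_{\alpha,0}}=\|\delta_{\overline b}\|_{H^{-1,\mu}_{\alpha,0}}\sqrt{\lambda_{\alpha,\mu,k}},
\]
using only that $\delta_{\overline b}\in H^{-1,\mu}_{\alpha,0}$ and that the eigenfunctions are $L^2$-normalized. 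You instead work directly with the explicit formula \eqref{eigenfunctions}, using boundedness of $J_\nu$ and the asymptotics $|J_\nu'(j_{\nu,k})|\sim\sqrt{2/(\pi j_{\nu,k})}$ to obtain the sharper bound $|\Phi_{\alpha,\mu,k}(\overline b)|\le C\,\lambda_{\alpha,\mu,k}^{1/4}$. Both arguments are valid; the paper's duality approach is shorter and requires no Bessel analysis beyond Proposition~\ref{spectrum}, while your explicit computation gives a tighter exponent (irrelevant here, since any polynomial growth suffices) at the cost of invoking an additional asymptotic fact about $J_\nu'$ at the zeros.
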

\begin{proof}
We can deduce from condition \eqref{cond1} that $\Phi_{\alpha,\mu, k}(\overline{b}) \neq 0,\, \text{ for all }k\geq 1.$
Thus,
\begin{align*}
0< |\Phi_{\alpha,\mu, k}(\overline{b})|
\leq \|\delta_{\overline{b}}\|_{H_{\alpha, 0}^{-1, \mu}} \|\Phi_{\alpha,\mu, k}\|_{H_{\alpha, 0}^{1, \mu}} =\|\delta_{\overline{b}}\|_{H_{\alpha, 0}^{-1, \mu}} \sqrt{\lambda_{\alpha,\mu, k}}.
\end{align*}
Then,
\begin{equation*}
\exists \xi>0, \quad 0< |\Phi_{\alpha,\mu, k}(\overline{b})| \leq \xi \lambda_{\alpha,\mu, k},\qquad \forall k\geq 1.
\end{equation*}
and therefore
\begin{equation*}
-\frac{\log(|\Phi_{\alpha,\mu, k}(\overline{b})|)}{\lambda_{\alpha,\mu, k}}
\geq - \frac{\log(\xi \lambda_{\alpha,\mu, k})}{\lambda_{\alpha,\mu, k}},\qquad \forall k\geq 1.
\end{equation*}
Since $\lambda_{\alpha,\mu,k} \rightarrow +\infty\quad \text{as}\quad k\rightarrow +\infty$, then $T^{(\alpha, \mu)}_0(\overline{b}) \in [0,+\infty]$.
\end{proof}

\end{document}